\def \x {{\bf x}}
\def \V {{\bf V}}
\def \O \boldsymbol{\Omega}
\def\E {{\bf E}}
\def\S {\mathcal{S}^2}
\def\ve {\varepsilon}
\begin{document}

\title{Effective Rheological Properties in Semidilute Bacterial Suspensions}




\author{   
        Mykhailo Potomkin       \and
        Shawn D. Ryan 
	\and
	Leonid Berlyand
}

%
\institute{
M. Potomkin \at
{Department of Mathematics,
The Pennsylvania State University,
University Park, PA 16802}\\
\email{mup20@math.psu.edu} 
\and 
S.~D. Ryan \at
Department of Mathematical Sciences and Liquid Crystal Institute,
Kent State University,
Kent, OH 44240\\
\email{sryan18@kent.edu} 
\and
L. Berlyand \at
Department of Mathematics,
The Pennsylvania State University,
University Park, PA 16802\\
\email{berlyand@math.psu.edu}           
}

\date{Received: date / Accepted: date}

\maketitle

\begin{abstract}
	Interactions between swimming bacteria have led to remarkable experimentally observable macroscopic properties such as the reduction of the effective viscosity, enhanced mixing, and diffusion.  In this work, we study an individual based model for a suspension of interacting point dipoles representing bacteria in order to gain greater insight into the physical mechanisms responsible for the drastic reduction in the effective viscosity. In particular, asymptotic analysis is carried out on the corresponding kinetic equation governing the distribution of bacteria orientations.   This  allows one to derive an explicit asymptotic formula for the effective viscosity of the bacterial suspension in the limit of bacterium non-sphericity. The results show good qualitative agreement with numerical simulations and previous experimental observations. Finally, we justify our approach by proving existence, uniqueness, and regularity properties for this kinetic PDE model.

\keywords{effective viscosity \and kinetic models \and bacterial suspension \and asymptotic analysis}
\end{abstract}
 \setcounter{tocdepth}{5}
 \tableofcontents

 	   \section{\bf Introduction}
 Bacterial suspensions exhibit remarkable macroscopic properties due to the emergence of self-organization among its components.   In particular, interesting effective properties  such as enhanced diffusivity, the formation of sustained whorls and jets, and the ability to extract useful work among other results  have been recently observed for suspensions of bacteria, such as {\it Bacillus subtilis} \cite{Wu,Sokolov2,LepGuaGolPesGol09,Sokolov3,CisKesGanGol11}.   The striking experimental observations on the effective viscosity provide the motivation for studying a suspension's effective properties;  namely, the observation of a seven-fold reduction in the effective viscosity of a suspension of swimming {\it B. subtilis} \cite{Sokolov}.  This reduction is observed below $2\%$ volume fraction typically referred to as the {\it dilute regime} where bacteria are far apart and essentially interact with the background fluid only.   With the assumption of no interbacterial interactions, this regime has been studied analytically  in recent works (e.g., \cite{Saintillan1,Haines1,Haines2,Haines3}). There bacterial tumbling was introduced in order for the formula to predict a decrease in the effective viscosity \cite{Haines3}.  However, in the absence of tumbling (e.g., for anaerobic bacteria) the decrease is still observed experimentally \cite{Sokolov}. 
 It was shown recently in \cite{RyaHaiBerZieAra11} that interbacterial interactions substantially contribute to effective viscosity and an estimate for this contribution was given. 
 Rigorous analysis of this contribution and its corresponding effect on the effective viscosity of the suspension is the main component of this paper. 
 
 
 
 
We begin with an individual based model (IBM) previously introduced in  \cite{RyaHaiBerZieAra11,RyaBerHaiKar12}, which has been successfully used to capture the decrease in the effective viscosity and other collective phenomena.  Such suspensions, where interbacterial interactions play an important role and are modeled as a sum of pairwise interactions, are referred to as {\it semi-dilute}.  Our goal is to identify the underlying mechanisms that contribute to the decrease of the effective viscosity in this concentration regime.  
The main tool we employ is a kinetic theory derived from this individual based model.

 The purpose for employing a kinetic approach is to replace a large system of coupled differential equations by a single  continuum partial differential equation with respect to a probability distribution of bacteria positions and orientations. Note that it is natural to consider probabilistic quantities since the main focus of this work is the study of the effective properties. The main computational advantage of the kinetic approach is that the number of bacteria $N$ does not increase the complexity of the problem \cite{Spoh91,BerJabPot13}.  Namely, the PDE could be solved numerically with a fixed spatial or temporal grid independent of $N$.
 In addition to the ability to consider many different initial conditions at once, another advantage to introducing this probabilistic framework is to consider the limiting regime as $N \to \infty$, the so-called {\it mean field limit}.   More information on kinetic equations can be found in the seminal works of the 1970's \cite{NeuWic74,BraHep77,Dob79} or more contemporary reviews \cite{Car10,JabPer00,Per04,Deg04}. 
 
 Significant difficulty in the analysis come from the incorporation of interactions.  First, they appear in the kinetic equation as a non-local term due to the fact that the suspension of interacting bacteria is generally described analytically by 
 configurations of \emph{all} bacteria.   
 Second, the main interactions that are taken into account are hydrodynamic, which diverge as bacteria approach one another as the square of their distance. This results in a singular kernel in this non-local term.   Thus, the kinetic equation consists of a nonlocal, nonlinear PDE due to the presence of interactions.

Using a kinetic approach, the main result of this paper is an explicit asymptotic formula for the effective viscosity 
 with interbacterial interactions taken into account.  
 The formula reveals the physical mechanisms necessary for the decrease in effective viscosity observed experimentally.   To achieve this result we first find the steady state solution of the kinetic equation and then use this solution to compute the effective viscosity.  For completeness, we also establish the well-posedness of the kinetic equation.

  This paper is organized as follows. Section~\ref{model} begins by introducing the individual based 
  model under consideration for a semi-dilute bacterial suspension.  From this, the kinetic equation for the orientation distribution is formally derived.   The reason we begin with the IBM is that the effective properties of a suspension are derived from knowledge of microscopic configurations, which is transferred from the IBM to the kinetic model. In Section~\ref{assumption} we introduce the main conditions under which we derive the asymptotic formula for the effective viscosity and discuss their physical significance. Section \ref{derivodens} contains the derivation of the asymptotic steady state solution to the kinetic equation for the orientation distribution in the limit of small non-sphericity.  The effective viscosity from the asymptotic formula is then compared to the same quantity computed from direct simulations of the individual based model in Section \ref{evform}. The important physical mechanisms for the decrease in viscosity are identified and the orientation distribution is compared to the results of previous works in the dilute case. In addition, the normal stress differences and relaxation time are considered. The existence, uniqueness and regularity properties of a solution to the kinetic PDE are proven in Section \ref{kin_solvable}. Finally, we formulate our conclusions and outline potential future investigations in Section~\ref{conc}. 

 \section{\bf Model for Semidilute Bacterial Suspensions}\label{model}
 
 We begin by introducing the coupled PDE/ODE system governing the fluid and bacteria dynamics respectively.  Each bacterium is represented as a point force dipole.   One force represents the bacterium's propulsion mechanism (e.g., flagellar motion) and the other is the opposing viscous drag exerted by the bacterium's body on the fluid.   This approximation has been experimentally verified by observing the flow due to a bacterium (e.g., {\it Bacillus subtilis}) in a fluid and comparing it to that of a force dipole \cite{DreDunCisGanGol11}.     
 
 
 As a bacterium swims through the fluid its trajectory may be altered through interactions with other bacteria and the background flow.  At every moment in time a bacterium propels itself in the direction in which it is oriented.  If one bacterium comes into close contact with another, then a collision can occur altering the bacterium's position.  This is modeled by an excluded volume potential.  Finally, the flow itself has an impact on a bacterium trajectory through the ambient background flow and the sum of flows induced from the propulsion of all the other bacteria on its position.  To make these ideas more concrete we now introduce an individual based model (IBM), which governs a bacterium's position and orientation.
 
 We consider $N$ bacteria with the position of the center of mass of the $i$th bacterium ${\bf x}^i = (x^i,y^i,z^i)$ and orientation ${\bf d}^i = (d_1^i,d_2^i,d_3^i)$.  A bacterium's translational velocity is derived from a balance of forces due to self-propulsion, collisions, and the flow field acting on the position of the bacterium.  A bacterium's orientation velocity is derived from a balance of torques in the form of Jeffery's equation for an ellipsoid in a linear flow with additional terms due to the flows generated by the other bacteria in the suspension \cite{Jef22}.   Thus, the equations of motion for bacterial positions ${\bf x}$ and orientations ${\bf d}$ originally introduced from first principles in \cite{RyaHaiBerZieAra11} are
 \begin{eqnarray}
 \dot{\bf x}^i & =& V_0{\bf d}^i + \sum_{j \neq i}  \left( {\bf u}^j({\bf x}^i,{\bf d}^j)  +  {\bf F}^j({\bf x}^i)\right) + {\bf u}^{\text{BG}}({\bf x}^i),  \label{rij-0} \displaystyle\\
 \dot{\bf d}^i & =& -\frac{1}{2}{\bf d}^i \times \left(\boldsymbol{\omega}^{\text{BG}}_0({\bf x}^i) + \sum_{j \neq i} \boldsymbol{\omega}^j({\bf x}^i,{\bf d}^j)\right)\nonumber\\&&\hspace{15 pt}-{\bf d}^i \times  \left[B{\bf d}^i \times \left({\bf E}^{\text{BG}}_0({\bf x}^i) +  \sum_{j \neq i} {\bf E}^j({\bf x}^i,{\bf d}^j)\right) {\bf d}^i\right] + \sqrt{2D}\dot{W}, \displaystyle \label{dij}
 \end{eqnarray}
 where $V_0$ is an individual bacterium's swimming speed and $B$ is the Bretherton constant which takes into account the geometry of the bacterium's body ($B\ll 1$: near spherical, $B \approx 1$: needle-like).   
 The externally-imposed planar shear flow contributes to each bacterium's motion through the fluid velocity, ${\bf u}^{BG} = (0, \gamma x, 0)^T$,
 as well as its effect on a bacterium's orientation through the vorticity $\boldsymbol{\omega}^{\text{BG}}_0 = \nabla_{\x} \times {\bf u}^{BG}$ and rate of strain ${\bf E}^{\text{BG}}_0 = \frac{1}{2}\left(\nabla_{\x} {\bf u}^{BG} + (\nabla_{\x} {\bf u}^{BG})^T\right)$.    Here $W$ is a white noise and we let $D \sim B^2$ be the diffusion coefficient.  This order of $D$ will be used  throughout this work and represents the idea that the random motion present in the system has a greater effect the more elongated a particle is.
 
 The additional terms in Jeffrey's equation \eqref{dij} beyond the contribution from the background flow are due to the vorticity $ \boldsymbol{\omega}^j$ and rate of strain ${\bf E}^j$ generated by the $j$-th dipole on position of the $i$-th dipole
 \begin{align*}
 \boldsymbol{\omega}^j &= \nabla_{\x} \times {\bf u}^j, \qquad 
 {\bf E}^j = \frac{1}{2}\left(\nabla_{\x} {\bf u}^j + (\nabla_{\x} {\bf u}^j)^T\right).
 \end{align*}
 Each of these terms depends on the fluid velocity ${\bf u}^{j}$, which is governed by Stokes equation and will be described in greater detail below.  
 
 \begin{remark}
 	The equations of motion \eqref{rij-0}-\eqref{dij} are a $5N$ coupled system of ordinary differential equations in comparison to the dilute case studied in \cite{Haines3} where there were only two ODEs governing the evolution of a single bacterium in an infinite medium (only depending on a single bacterium's orientation).  Thus, the semi-dilute system of equations adds a greater complexity than the dilute case previously studied.
 \end{remark}
 
 The use of Stokes equation to model the fluid is justified by estimating the Reynold's number.  Based on the typical size $\ell_0 \sim 1\;\mu \text{m}$  and swimming speed $V_0 \sim 20 \;\mu \text{m}/\text{s}$ of a bacterium, 
 in addition to the typical dynamic viscosity $\eta_0 \sim \;10^{-3}\; \text{Pa}\cdot \text{s}$ and density $\rho \sim \;10^{3}\; \text{kg}/\text{m}^3$ of the suspending fluid, the flow has a Reynolds number $Re$ around  $2\times 10^{-5} \ll 1$.   Thus, inertial effects can be neglected. 
 Also, it is assumed that a steady-state flow is established on a timescale much smaller than  the characteristic timescale, which is the time for a bacterium to swim its length.  
 
 The flow at the position of bacterium $i$ due to bacterium $j$ is given by 
 ${\bf u}^j({\bf x}^i,{\bf d}^j) = {\bf u}({\bf x}^j-{\bf x}^i,{\bf d}^j)$ where ${\bf u}({\bf x},{\bf d})$ is a solution of the Stokes problem
 \begin{equation}\label{stokeseqnsingle}
 \left\{
 \begin{array}{lr}
 \displaystyle
 \eta_0\Delta_{\x}\mathbf{u}(\mathbf{x},{\bf d}) - \nabla_{\x} p(\mathbf{x},{\bf d}) = \nabla_{\x} \cdot \bigl[ {\bf D}(\mathbf{d}) \delta(\mathbf{x})\bigr], & \mathbf{x} \in \mathbb{R}^3,\\
 \nabla_{\x} \cdot \mathbf{u}(\mathbf{x},{\bf d}) = 0, & \mathbf{x} \in \mathbb{R}^3,\\
 \mathbf{u}({\bf x},{\bf d}) \to 0, & |{\bf x}| \to \infty,\\
 \end{array}
 \right.
 \end{equation}
 where $\eta_0$ is the ambient fluid viscosity and $p$ is the pressure.
 The dipole tensor ${\bf D}=\{D_{lm}\}$ is given by
 \begin{equation}\label{dipolemoment}
 D_{lm}(\mathbf{d}) := U_0\left(d_ld_m - \frac{1}{3}\delta_{lm}\right),
 \end{equation}
 where $U_0$ is 
 the strength of the dipole referred to as the {\it dipole moment}. For pushers, bacteria that propel themselves from behind such as {\it B. subtilis}, $U_0<0$.  Equation \eqref{stokeseqnsingle} has an explicit solution:
 \begin{equation}
 u_k({\bf x}, {\bf d}) := \frac{1}{8\pi\eta_0}\sum_{l=1}^3\sum_{m=1}^3 D_{lm}({\bf d})\mathcal{G}_{kl,m}(\x),
 \end{equation}
 where $\mathcal{G}_{kl}({\bf x}) = \frac{1}{8\pi\eta_0}\left(\frac{\delta_{kl}}{|{\bf x}|}+\frac{{x}_k{x}_l}{|{\bf x}|^3}\right)$ is the Oseen tensor. 
 
 \begin{remark}
 	In order to study the role of interactions in semi-dilute suspensions it is natural to deal with a point representation of swimmers such that the whole suspension is modeled by points interacting in the fluid. 
 	In our paper, a swimmer is represented by a point force dipole with the dipole tensor \eqref{dipolemoment}.  
 	In general, for a given model of a swimmer, such a point representation can by found as the second order term in  the multipole expansion, see  \cite{KimKar91}.     
 	{\it We note that all results of this paper such as the asymptotic formula for orientation distribution and effective viscosity can be easily modified to semi-dilute suspensions with swimmers whose dipole tensor is different from \eqref{dipolemoment}.} 
 \end{remark}

 In order to analyze the system \eqref{rij-0}-\eqref{dij}, the associated kinetic theory for the probability density of bacterial configurations (positions and orientations of each bacterium) is studied.  In general, to derive the corresponding kinetic equation one 
 assumes that initial conditions are random.  Then each sum in the equations of motion is a sum of identically distributed random variables.  The key step in the formal derivation of the kinetic equation is replacing all sums in the equations of motion by their expectations 
 \cite{Poz00,Spoh91,Jab14}.
 This allows one to replace all the sums representing interactions by integrals with respect to a probability density function $P (t, {\bf x}, {\bf d})$ of finding a given bacterium at position ${\bf x}$ with orientation ${\bf d}$.    
 
 By replacing the sums with integrals in the system \eqref{rij-0}-\eqref{dij} and enforcing conservation of probability, a standard Fokker-Planck equation describing the evolution of the density $P$ is obtained 
 \begin{equation}\label{init}
 \partial_t P + \nabla_{\bf x} \cdot ({\bf V} P)+ \nabla_{\bf d} \cdot (\boldsymbol{\Omega} P) - D\Delta_{\bf d} P=0,
 \end{equation}
 where the translational and orientation fluxes are defined by
 { \begin{eqnarray}
 	\hspace{-.2in} \V({\bf x},{\bf d}) &:=& V_0{\bf d} + \frac{1}{|V_L|}\int_{S^2}\int_{V_L} {\bf u} P(\x',{\bf d}')d\x'dS_{{\bf d}'}+{\bf u}^{BG}({\bf x}),\label{ktrij}\\
 	\hspace{-.2in} \boldsymbol{\Omega}(\x,{\bf d}) &:=& \frac{1}{|V_L|}\int_{S^2} \langle \boldsymbol{\omega}+B\E,P(\x',{\bf d}') \rangle_{\x'} dS_{{\bf d}'}+\boldsymbol{\omega}^{BG}({\bf d})+B\E^{BG}({\bf d}).\label{ktdij}
 	\end{eqnarray}}
 Here $<\cdot , \cdot >$ denotes the duality with respect to the $L^2$-norm, $V_L := [-L,L]^3$, and we neglect the Lennard--Jones term $F$ due to the fact that collisions only play a small role at low concentrations. The functions $u,\boldsymbol{\omega},$ and $\E$ under the integral sign depend on $\x-\x',{\bf d}$ and ${\bf d}'$, and they are defined as follows
\begin{eqnarray}
&{\bf u}({\bf x}, {\bf d}):=\frac{U_0}{8\pi\eta_0}\nabla_{\bf x}\cdot \left[({\bf d}{\bf d}-I/3)\mathcal{G}({\bf x})\right],&\nonumber\\
&\boldsymbol{\omega}({\bf x},{\bf d},{\bf d}'):=-\frac{1}{2}\bf d\times\left[\nabla_{\bf x}\times {\bf u}({\bf x},{\bf d}')\right],\label{defn}&\\
&\;\E({\bf x},{\bf d},{\bf d}'):=-{\bf d} \times\left[{\bf d} \times D_{\bf x}({\bf u}(\x,{\bf d}')){\bf d}\right],& \nonumber
 	\end{eqnarray}
 where $D_{\x}({\bf u}) := \frac{1}{2}(\nabla_\x {\bf u} + [\nabla_{\x} {\bf u}]^T)$ represents the symmetric gradient and $I$ is the identity matrix.  Also, $\boldsymbol{\omega}^{BG}({\bf d})$ and $\E^{BG}({\bf d})$ are defined in the same way as \eqref{defn}, but with the fluid velocity ${\bf u}$ replaced with the background flow ${\bf u}^{BG}$.   
 
 \begin{remark}
 	Since $\boldsymbol{\omega}, {\bf E} \sim \frac{1}{|{\bf x}-{\bf x}'|^3}$, the integrals with respect to the spatial variables must be considered in the distributional or principal value sense (which are equivalent here).  Namely,  
  \begin{equation*}
  <\frac{\partial u_i}{\partial x_j},\varphi>=C_{ij}({\bf d})\varphi(0)+\int\limits\frac{\partial u_i}{\partial x_j}(\varphi(\x)-\varphi(0))d{\x},
  \end{equation*} 
  where
 \begin{equation*}
C_{ij}({\bf d})=\lim\limits_{\varepsilon \rightarrow 0}\int\limits_{|\x|=\ve}u_in_jds_{\x}.
 \end{equation*}
 	
\end{remark}
 
 The orientation vector ${\bf d} \in \mathcal{S}^2$ can be represented by two independent angles in spherical coordinates
 \begin{eqnarray}\label{d_spherical}
 	&{\bf d} :=(\cos\alpha \sin\beta,\sin\alpha \sin\beta,\cos\beta)=(d_1,d_2,d_3),
 \end{eqnarray}
 for azimuthal angle $\alpha \in [0,2\pi)$ and polar angle $\beta \in [0,\pi)$ with unit basis vectors $\hat{\alpha} :=(-\sin\alpha,\cos\alpha,0)$ and $\hat{\beta} :=(\cos\alpha\cos\beta,\sin\alpha\cos\beta,-\sin\beta)$ respectively.
 Here one must be careful to note that the divergence and the Laplacian in orientations (the Laplace-Beltrami operator) in \eqref{init} are taken over the unit sphere.  In particular, for any field $A=A({\bf d})$ the following definition holds
 \begin{eqnarray}\nonumber
 \nabla_{{\bf d}}\cdot A &:=&\frac{1}{\sin\beta}\left[\partial _{\alpha}(A_{\alpha})+\partial_{\beta}(\sin\beta A_{\beta})\right] \\ &\hspace{5 pt}=& \tilde{\nabla}_{{\bf d}}\cdot A-\left.\frac{\partial }{\partial |{\bf d}|}\left\{|{\bf d}|^2(A\cdot {\bf d})\right\}\right|_{|{\bf d}|=1},\label{div2}
 \end{eqnarray}
 where $A_{\alpha}=A\cdot\hat{\alpha}$, $A_{\beta}=A\cdot \hat{\beta}$, and $\tilde{\nabla}_{{\bf d}}$ is the classical gradient.   
 

 \subsection{\bf Definition of the effective viscosity for a suspension of point force dipoles}

 To define the effective viscosity consider the contributions to stress:  (i)  due to dipolar hydrodynamic interactions 
 $$
 \Sigma_{lm}^d(\overline{\bf d}) := \sum_{i = 1}^N \frac{U_0}{|V_L|}(d_ld_m - \delta_{lm}/3),\;\;l,m=1,2,3,
 $$
  depending only on each particle's orientation \cite{Bat70} and  (ii) due to soft collisions (the excluded volume constraints)  
  $$
  \Sigma^{LJ}_{lm}(\overline{\bf x}) := \sum_{i=1}^N \sum_{j \neq i} \frac{ F_l({\bf x}^i-{\bf x}^j) (x_m^i-x_m^j)}{|V_L|},\;\;l,m=1,2,3,
  $$
  depending only on the relative positions of each bacterium \cite{Ziebert}.  Both are combined to form the total stress due to interactions first used in \cite{RyaHaiBerZieAra11,RyaBerHaiKar12}.  We assume that all bacteria are in the volume $V_L$ at any instant of time. The bacterial configurations are denoted by $\overline{\bf x} := (\x^1,...,\x^N)$ and $\overline{\bf d} := ({\bf d}^1,...,{\bf d}^N)$.  
  
The ultimate goal is to compute the effective viscosity due to hydrodynamic interactions at low concentrations for comparison with experimental observation \cite{Sokolov} and numerical simulations.  At lower concentrations $\phi$, where the striking experimental decrease in the effective viscosity was observed, the contribution due to collisions is relatively small and for the proceeding analysis will be neglected
 \begin{equation}\label{eqn3141}
 \Sigma({\bf x}, {\bf d}) = \Sigma^d({\bf d}) + \Sigma^{LJ}({\bf x}) \approx \Sigma^d({\bf d}), \quad \text{for $\phi$ small}.
 \end{equation}
 The exact concentration interval where the formula \eqref{eqn3141} works well will be determined later by comparison with direct numerical simulations of the suspension.      
 
 Thus, it is sufficient to restrict attention to the density of orientations denoted $P_{{\bf d}}({\bf d})$ defined as  
 \begin{equation}
 P_{{\bf d}}({\bf d}) := \frac{1}{N} \int_{V_L} P({\bf x}, {\bf d}) d{\bf x}, \qquad  \text{where} \quad \int_{\mathcal{S}^2} P_{{\bf d}}({\bf d}) = 1.
 \end{equation}
 For comparison with experiment, the main quantity of interest is the shear viscosity or component $\eta_{1212}$ of the fourth order viscosity tensor relating the stress to the strain, henceforth, denoted as $\hat{\eta}$.   We define the effective viscosity as the averaged ratio of the corresponding components of the stress and strain tensors
 \begin{equation}
 \frac{\hat{\eta} - \eta_0}{\eta_0} := \frac{1}{|V_L|} \int_{V_L} \int_{\mathcal{S}^2} \frac{\Sigma_{xy}}{\gamma} P({\bf x}, {\bf d}) d{\bf x}dS_{{\bf d}} = \frac{\rho}{\gamma}\int_{\mathcal{S}^2} \Sigma_{xy}^d({\bf d}) P_{\bf d}({\bf d})dS_{{\bf d}},\label{evsep}
 \end{equation}
 as in \cite{RyaHaiBerZieAra11,RyaBerHaiKar12}. Here $\rho=N/|V_L|$ is the mean concentration or number density. 
 
  The following nonlinear, nonlocal integro-differential equation describes the evolution of the orientation density $P_{{\bf d}}(t,{\bf d})$
  \begin{equation}\label{cons2}
  \partial_t P_{{\bf d}}(t,{\bf d}) = -\nabla_{\bf d} \cdot \left(< {\bf \Omega} >_{\bf x}P_{\bf d}(t,{\bf d})\right),
  \end{equation}
  where  $< {\bf \Omega} >_{\bf x} = \frac{1}{N}\int_{V_L} {\bf \Omega} P_{\bf x}(t,{\bf x}) d{\bf x}$, ${\bf \Omega}$ contains the background flow and interaction terms 
  {\begin{equation*}
  	{\bf \Omega }(t,\x,{\bf d}) =  \boldsymbol{\omega}^{BG} +{\bf E }^{BG} + \frac{1}{N|V_L|} \int_{\mathcal{S}^2} \int_{V_L} \langle \boldsymbol{\omega}  + B{\bf E} ,  P (t,{\bf x}^\prime,  {\bf d}^\prime)  \rangle d {\bf x}^\prime  d {\bf d}^\prime.
  	\end{equation*}}
  Equation \eqref{cons2} is obtained by integrating \eqref{init} in ${\bf x}$ and dividing by $N$.

 \begin{remark}\label{remark1-ev}
 	In this work, lower concentrations of bacteria are considered where the primary contribution to the effective viscosity from interactions is the dipolar component of the stress, $\Sigma^d$, which only depends on the set of bacterium orientations.  Thus, the $\dot{\bf x}$ equation will not factor into the final formula; however, ${\bf F}$ is the force associated to a truncated Lennard-Jones type potential imposing excluded volume constraints.  For more information on its definition and why it is needed for global solvability see \cite{RyaBerHaiKar12}.  This quantity still remains in the original coupled ODE system used for simulations to ensure that particles remain a finite distance apart avoiding an artificial divergence in the fluid velocity ${\bf u} \sim 1/|{\bf x}^i - {\bf x}^j|$ (see section~\ref{compsim}).
 \end{remark}

 \section{\bf Conditions imposed to derive an explicit formula for the effective viscosity}\label{assumption}
 
 To calculate the effective viscosity we impose three conditions to make the system more amenable to mathematical analysis.  
 %
 
 \subsection{\bf Separation of variables}
 
 In this paper only small concentrations are considered where collisions are not important, yet the flow of each bacterium affects all others.  The bacteria are at large distances apart and, thus, since the background flow provides the major contribution to bacterial motion, then distributions of positions and orientations become {\it essentially independent} of one another.  This can be justified from the experimental work of {\it Aranson et al.} (e.g., see \cite{Sokolov2,Aranson1}).   Henceforth, it is assumed that the positions and orientations are decoupled.   
 
 \vspace{.1in}
 
 \noindent{\bf Condition (C1):}  The density $P({\bf x}, {\bf d})$ can be written as 
 \begin{equation}\label{assump_separ}
 P({\bf x}, {\bf d}) = P_{\x}({\bf x})P_{{\bf d}}({\bf d}) \qquad  \text{ (separation of variables),} 
 \end{equation}
 where
 $
 P_{{\bf d}}({\bf d}) = \frac{1}{N}\int_{V_L} P({\bf x}, {\bf d})d {\bf x} 
 $
 and $\int_{\mathcal{S}^2} P_{{\bf d}}({\bf d}) dS_{{\bf d}} = 1$.  Here $N$ is the number of bacteria, $\text{supp}(P_{\x}(\x))~\subset~V_L$, where the spatial density $P_{\x}(\x)$ can be found by $P_{\x}(\x) = \int_{\mathcal{S}^2} P(\x,{\bf d}) dS_{{\bf d}}$.
 
 \vspace{.1in}
 
 This condition is used twice. 
 First, the effective viscosity at low concentration only depends on the orientation (see Remark~\ref{remark1-ev}).  Thus, using condition (C1) an explicit equation for the evolution of the orientation distribution can be derived from \eqref{cons2}.  Second,  ${\bf V}$ formally contains diverging integrals (e.g., $\int \int {\bf F} d{\x}dS_{{\bf d}}$ since ${\bf F} \sim |\x|^{-12}$), which  will no longer be present in the equation for the orientation distribution $P_{\bf d}({\bf d})$ allowing for further mathematical analysis.  It will be observed at the end of this work that the asymptotic expansion for  $P_{{\bf d}}({\bf d})$ depends on $P_{\x}(\x)$ through the coefficients, thus all the information about spatial patterns is preserved.
 
 
 \subsection{\bf Existence of a steady state $P_{{\bf d}}({\bf d})$}
 
  A steady state solution to \eqref{cons2} is defined as follows
 
 \begin{definition}
 	$\hat{P}_{{\bf d}}({\bf d})$ is called a steady state solution to \eqref{cons2} if it solves
 	\begin{equation*}
 	0 = -\nabla_{\bf d} \cdot \left(< {\bf \Omega} >_{\bf x}\hat{P}_{\bf d}({\bf d})\right).
 	\end{equation*}
 \end{definition}
 
To compute time independent effective viscosity we impose the following condition. 
%
 
 \vspace{.1 in}
 
 \noindent{\bf Condition (C2):} There exists a {\it nontrivial} steady state solution to \eqref{cons2}.
 
 \vspace{.1 in}
 
 First, note that there is no trivial steady state unless $B=0$ 
 in which case we find the uniform orientation distribution ${P}_{{\bf d}}({\bf d}) = \frac{1}{4\pi}$.  This can be obtained both in the limit as $B \to 0$ in the asymptotic results derived herein for $P_{{\bf d}}({\bf d})$ and from observing that the trivial steady state would be a constant satisfying the constraint  $\int_{\mathcal{S}^2} P_{{\bf d}}({\bf d}) dS_{{\bf d}} = 1$.   One still needs to prove the existence of a steady state in the general case $B \neq 0$. The condition (C2) can be formulated as a theorem and its proof  may be the topic of a future work.   Here we remain focused on the study of the effective viscosity.
 
 \subsection{\bf $P_{\x}(\x)$ is constant in the $z$-direction}\label{ass_a3}
 
 We assume that $P_{\x}(\x)$ is constant in $z$ for the case of the planar shear background flow under consideration in this work. This is consistent with past numerical observations by {\it Ryan et al.}~\cite{RyaHaiBerZieAra11} and experimental observation in \cite{SokGolFelAra09} since the suspension remains below any critical concentration for three-dimensional collective motion.  
 Also, collective motion even in full 3D experiments and simulations in planar shear flow has been observed to be essentially 2D in the shearing plane \cite{SokGolFelAra09}.  Thus, following experimental observation, we assume the same. 
 
 \vspace{.1in}
 
 \noindent{\bf Condition (C3):}  The density $P_{\x}(\x)$ is constant in $z$.
 
 \vspace{.1in}
 
 The condition (C3) essentially follows from the physical setup of the {\it quasi-2D} thin film suspension.  
 In Appendix \ref{pxapp} we show that the condition (C3) leads to the following representation formula for the Fourier transform of the spatial distribution $F[P_{\x}]$:
 \begin{equation}\label{ass_a3_formula}
( F[P_{\x}])^2=\delta(k_3)\hat{P}^2_{12} (k_1,k_2).
 \end{equation} 
 Here ${\bf k}=(k_1,k_2,k_3)$ is the Fourier variable, and $\hat{P}_{12} (k_1,k_2)$ is a smooth function defined in ${\bf k}$-space independent of $k_3$. 
 
 
 \section{\bf Derivation of asymptotic expression for $P_{\bf d}$ for small $B$}\label{derivodens}
 
 In this section, an expression for the orientation distribution $P_{{\bf d}}({\bf d})$ is derived.  Since \eqref{cons2} is a nonlinear integro-differential equation it is challenging, in general, to find an analytical solution.  Thus, we look for $P_{{\bf d}}({\bf d})$ by  asymptotic expansion in the limit of small non-sphericity ($B \ll 1$).  This will allow us to apply analytical techniques and derive an expression, which will provide physical insight into the mechanisms contributing to the decrease in the effective viscosity.

 Rewrite the equation for the orientation density $P_{{\bf d}}({\bf d})$ \eqref{cons2}  as  (the argument $t$ is suppressed for simpler notation) 
\begin{equation}\label{angleeq}
 	\partial_t P_{{\bf d}}+\nabla_{{\bf d}}\cdot \left[(\boldsymbol{\omega}^{BG}+B\E^{BG})P_{{\bf d}}\right]+ \frac{1}{N|V_L|}\int \nabla_{{\bf d}}\cdot (\hat{\boldsymbol{\Omega}} P(\x,{\bf d})) d\x=0,
 	\end{equation}
 where
 	\begin{equation}
 	\hat{\boldsymbol \Omega }(\x,{\bf d}) :=  \frac{1}{|V_L|} \int_{\mathcal{S}^2}  \langle  \boldsymbol{\omega}  + B{\bf E}  ,   P_{\bf x} ({\bf x}^\prime) \rangle_{\x'}  P_{\bf d}({\bf d}^\prime) d {\bf d}^\prime. \label{deqn}
 	\end{equation}
 Herein $\hat{\boldsymbol{\Omega}}$ will denote the component of the orientational flux ${\boldsymbol{\Omega}}$ due to interactions.  Observe that the $\boldsymbol{\omega}$ and ${\bf E}$ are functions of ${\bf x}-{\bf x}'$, ${\bf d}$, and ${\bf d}'$.  

 Using Condition (C1) defined in \eqref{assump_separ} we obtain a closed form equation for a steady state $P_{{\bf d}}({\bf d})$ (provided that $P_{\x}$ is given):
\begin{eqnarray}
 	0 &=& \nabla_{{\bf d}}\cdot \left[(\boldsymbol{\omega}^{BG}+B\E^{BG})P_{{\bf d}}({\bf d})\right] \nonumber\\ &&\hspace{20 pt}+\frac{1}{N|V_L|}\int_{V_L}  \nabla_{\bf d} \cdot \left(\hat{\bf \Omega}(\x,{\bf d},{\bf d}') P_{\x}(\x)P_{{\bf d}}({\bf d})  \right)dS_{{\bf d}'} d\x.\label{sstate}
 	\end{eqnarray}
The first term in \eqref{sstate} is the contribution due to the background planar shear flow: 
 \begin{align}
 \nabla_{{\bf d}}\cdot \left[(\boldsymbol{\omega}^{BG}({\bf d})+B\E^{BG}({\bf d}))P_{{\bf d}}({\bf d})\right] &= -\frac{3\gamma B}{2}\sin^2\beta\sin 2\alpha P_{{\bf d}}({\bf d})\nonumber\\&\quad +\frac{\gamma}{2}(1+B\cos 2\alpha)\{\partial_{\alpha}P_{{\bf d}}({\bf d})\}\label{eqn:bgflow_0}\\
 &\quad +\frac{\gamma B}{4}\sin 2\alpha\sin 2\beta \{\partial_{\beta}P_{{\bf d}}({\bf d})\}.\nonumber
 \end{align}
 The second term in \eqref{sstate} is the contribution of hydrodynamic interactions between bacteria. 
 Notice the convolution form of the nonlocal terms in the spatial variable.  In the next section, the Fourier transform will be utilized to compute quantities necessary to derive the formula for the effective viscosity.  Specifically, using tools such as Parseval's Theorem, one can take the spatial integrals and consider them in Fourier space where they will prove easier to analyze.  After using the separation of variables \eqref{assump_separ}, the density will be expressed in terms of the Fourier frequencies ${\bf k}$.
 
 The main goal for the remainder of this section is to write the system in a convenient form for using the Fourier transform.  This idea follows naturally from the aforementioned observation that all the interactions terms take the form of a convolution.  
 Introduce the Fourier transform $C({\bf k}):=F[P_{\x}]({\bf k})$:
 \begin{equation}
 P_{\x}(\x)=\frac{1}{(2\pi)^3}\int e^{i{\bf k}\cdot \x}C({\bf k})d{\bf k}.
 \end{equation}
 Define ${\bf H}(\x - \x',{\bf d},{\bf d}') := \boldsymbol{\omega}(\x-\x', {\bf d}, {\bf d}') + B{\bf E}(\x-\x', {\bf d}, {\bf d}')$, then the following equalities hold 
 \begin{equation}
 <{\bf H} \star P_{\x}, P_{\x}>_{\x}=<F[{\bf H} \star P_{\x}], F[P_{\x}]>_{{\bf k}}=<F[{\bf H}],(F[P_{\x}])^2>_{{\bf k}},
 \end{equation}
 where $\star$ and $F$ stand for convolution and Fourier transform, respectively.  The first equality is Parseval's identity and the second is the fact that the Fourier transform of a convolution is the product of Fourier transforms. Thus, one can rewrite equation \eqref{sstate} in the following form
 \begin{eqnarray}
 &&	\nabla_{{\bf d}}\cdot \left[(\boldsymbol{\omega}^{BG}+B{\bf E}^{BG})P_{{\bf d}}({\bf d})\right]\nonumber\\&&\hspace{30 pt}+\int _{S^2}\nabla_{{\bf d}}\cdot \left\{P_{{\bf d}}({\bf d})P_{{\bf d}}({\bf d}')<F[{\bf H}](F[P_{\x}])^2>_{{\bf k}}\right\}dS_{{\bf d}'}=0.\label{lioumod}
 	\end{eqnarray}
 In order to compute $F[{\bf H}]$ one must first understand how the Fourier transform acts on the fluid velocity ${\bf u}$ and its derivatives.
 
 \subsection{\bf Evaluation of Fourier transforms}\label{sovft}
 
 In order to analyze \eqref{lioumod}, an analytical expression for the Fourier transform $F[{\bf H}] = F[\boldsymbol{\omega}] + BF[{\bf E}]$ is needed.   Both terms depend on the fluid velocity ${\bf u}$ defined by \eqref{stokeseqnsingle}.
 Recall the dipolar stress 
 \begin{equation}\label{def_of_sigma}
 \Sigma(\x, {\bf d}) ={\bf D}({\bf d})\delta({\x})= U_0({\bf d} {\bf d} - I/3)\delta(\x).
 \end{equation} 
 Then the Stokes equation in \eqref{stokeseqnsingle} can be written as 
 \begin{equation}\label{Stokes}
 -\eta_0\Delta_{\x}{\bf u}+\nabla_{\x}p=\nabla_{\x}\cdot \Sigma(\x,{\bf d}), \;\;\nabla_{\x}\cdot {\bf u}=0.
 \end{equation} 
 Denote the Fourier transform of a function $f(x)$ as $$\tilde{f}({\bf k})=F\left[f\right]({\bf k})=\int e^{-i({\bf k}\cdot \x)}f(\x)d\x,$$ and compute the Fourier transform of ${\bf u}$ and the symmetric gradient $D_{\x}({\bf u})$.
 
 \begin{proposition}\label{propu}
 	Let ${\bf u}$ be a solution of \eqref{stokeseqnsingle} and let $\Sigma$ be defined by \eqref{def_of_sigma}. Then 
 	\begin{eqnarray}
 	&(i)& \tilde{\Sigma}({\bf d}') =U_0\left({\bf d}'{\bf d}'^{*}-I/3\right),\nonumber\\
 	&(ii)& \tilde{\bf u}({\bf k})=\frac{i}{\eta_0|{\bf k}|}\left(I-\frac{{\bf k}{\bf k}^{*}}{|{\bf k}|^2}\right)\tilde{\Sigma}({\bf k})\frac{{\bf k}}{|{\bf k}|},\label{fourier_u}\\
 	&(iii)& F\left[D_{\x}({\bf u})\right]=-\frac{1}{2\eta_0 |{\bf k}|^4}\left(|{\bf k}|^2\tilde{\Sigma} {\bf k}{\bf k}^{*}-2{\bf k}{\bf k}^{*}\tilde{\Sigma}{\bf k}{\bf k}^{*}+|{\bf k}|^2{\bf k}{\bf k}^{*}\tilde{\Sigma}\right).\label{fourier_du}
 	\end{eqnarray}
 	Here $*$ denotes the transpose.
 \end{proposition}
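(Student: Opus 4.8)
The plan is to pass to Fourier space directly in the Stokes PDE, which converts the singular kernels $\boldsymbol{\omega},{\bf E}\sim|\x|^{-3}$ into elementary algebraic expressions and bypasses any regularization of the convolutions. Throughout I use the convention fixed just above the proposition, $F[\partial_{x_j}f]=ik_j\tilde{f}$, so that $F[\Delta_{\x}f]=-|{\bf k}|^2\tilde{f}$, $F[\nabla_{\x}p]=i{\bf k}\,\tilde{p}$, $F[\nabla_{\x}\cdot{\bf u}]=i\,{\bf k}\cdot\tilde{\bf u}$, and $F[\nabla_{\x}\cdot\Sigma]=i\,\tilde{\Sigma}{\bf k}$ for the symmetric tensor $\Sigma$. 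Part (i) is then immediate: by \eqref{def_of_sigma}, $\Sigma(\x,{\bf d}')={\bf D}({\bf d}')\,\delta(\x)$ with ${\bf D}({\bf d}')$ independent of $\x$, and $F[\delta]=1$, so $\tilde{\Sigma}$ equals the constant, symmetric, trace-free matrix ${\bf D}({\bf d}')=U_0({\bf d}'({\bf d}')^{*}-I/3)$ of \eqref{dipolemoment}.

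For (ii), I apply $F$ to the Stokes system in the form \eqref{Stokes}: the momentum balance becomes $\eta_0|{\bf k}|^2\tilde{\bf u}+i{\bf k}\,\tilde{p}=i\,\tilde{\Sigma}{\bf k}$ and incompressibility becomes ${\bf k}\cdot\tilde{\bf u}=0$. Taking the scalar product of the momentum equation with ${\bf k}$ and using ${\bf k}\cdot\tilde{\bf u}=0$ eliminates the velocity and gives $\tilde{p}=({\bf k}^{*}\tilde{\Sigma}{\bf k})/|{\bf k}|^2$; substituting $\tilde{p}$ back leaves
\[
\tilde{\bf u}({\bf k})=\frac{i}{\eta_0|{\bf k}|^2}\Big(I-\frac{{\bf k}{\bf k}^{*}}{|{\bf k}|^2}\Big)\tilde{\Sigma}{\bf k},
\]
which is \eqref{fourier_u} once one writes $|{\bf k}|^{-2}{\bf k}=|{\bf k}|^{-1}(|{\bf k}|^{-1}{\bf k})$; the right-hand side is visibly orthogonal to ${\bf k}$, so it is the unique divergence-free solution. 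The prefactor $|{\bf k}|^{-2}$ means $\tilde{\bf u}$ is to be understood as a tempered distribution near ${\bf k}=0$, in keeping with the principal-value reading recorded after \eqref{defn}; since these transforms are always paired against smooth densities in the sequel (e.g.\ in \eqref{lioumod}), the origin causes no difficulty.

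For (iii), the same differentiation rule gives $F[\nabla_{\x}{\bf u}]=i\,\tilde{\bf u}{\bf k}^{*}$ and $F[(\nabla_{\x}{\bf u})^{T}]=i\,{\bf k}\,\tilde{\bf u}^{*}$, so
\[
F[D_{\x}({\bf u})]=\frac{i}{2}\big(\tilde{\bf u}{\bf k}^{*}+{\bf k}\,\tilde{\bf u}^{*}\big).
\]
Inserting the formula for $\tilde{\bf u}$ from (ii), using that both $\tilde{\Sigma}$ and $I-{\bf k}{\bf k}^{*}/|{\bf k}|^2$ are symmetric, and noting that the cross term ${\bf k}{\bf k}^{*}\tilde{\Sigma}{\bf k}{\bf k}^{*}/|{\bf k}|^2$ arises from both summands, one collects everything over the common denominator $|{\bf k}|^4$ and arrives at precisely \eqref{fourier_du}.

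I do not expect a genuine obstacle: once the PDE is transformed, the whole proposition is linear algebra. The only points needing care are keeping the factors of $i$ and the signs consistent with the chosen Fourier convention and with the form \eqref{Stokes} of the Stokes equations, the tensor bookkeeping in (iii) where left- versus right-multiplication by ${\bf k}{\bf k}^{*}$ must be tracked and the symmetry of $\tilde{\Sigma}$ invoked, and the (purely formal) remark that the $|{\bf k}|^{-2}$ and $|{\bf k}|^{-4}$ singularities at the origin are harmless because these objects are only ever integrated against smooth spatial densities later on.
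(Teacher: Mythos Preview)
Your proposal is correct and follows essentially the same route as the paper: part (i) via $F[\delta]=1$, part (iii) via $F[D_\x({\bf u})]=\tfrac{i}{2}(\tilde{\bf u}\,{\bf k}^*+{\bf k}\,\tilde{\bf u}^*)$ together with the symmetry of $\tilde{\Sigma}$, and part (ii) by first isolating $\tilde p$ and then substituting back. The only cosmetic difference is that in (ii) the paper takes the spatial divergence of \eqref{Stokes} before transforming (obtaining $\Delta_\x p=\nabla_\x\cdot(\nabla_\x\cdot\Sigma)$ and hence $\tilde p$), whereas you transform first and then dot with ${\bf k}$; these are the same operation in Fourier space, so the arguments coincide.
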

 
 \begin{proof} The part $(i)$ follows from the fact that the Fourier transform of $\delta$-function is 1.
 	
 	We split the proof of $(ii)$ into two steps: First, we find the Fourier transform of the pressure $p$, then by using the first equation in \eqref{stokeseqnsingle} we find $\tilde{\bf u}$.\\
 	\noindent{\it Step 1: Evaluation of $\tilde{p}=F[p]$}.  By taking the divergence of \eqref{Stokes} in $\x$ we obtain
 	\begin{equation}
 	\Delta_{\x}p=\nabla_{\x}\cdot(\nabla_{\x}\cdot \Sigma).\label{divStokes}
 	\end{equation}
 	Observe that 
 	\begin{equation*}
 	F\left[\Delta_{\x}p\right]=-|{\bf k}|^2\tilde{p}({\bf k}), \;\; F\left[\nabla_{\x}\cdot(\nabla_{\x}\cdot\Sigma)\right]
 	=\int \Sigma:\nabla_{\x}^2e^{-i{\bf k}\cdot \x}d\x=-\tilde{\Sigma}({\bf k}):{\bf k}{\bf k}^{*}.
 	\end{equation*}
 	   Substituting these formulas into \eqref{divStokes} we obtain $-|{\bf k}|^2\tilde{p}({\bf k})=-\tilde{\Sigma}({\bf k}):{\bf k}{\bf k}^{*}$, and, thus, we find an expression for the Fourier transform of the pressure $p$:
 	\begin{equation}\label{eqn:tildep}
 	\tilde{p}({\bf k})=\frac{1}{|{\bf k}|^2}\tilde{\Sigma}({\bf k}):{\bf k}{\bf k}^{*}.
 	\end{equation}
 	
 	\noindent{\it Step 2: Evaluation of $\tilde{\bf u}=F[{\bf u}]$}.  Return to Stokes equation \eqref{Stokes} and observe that 
 	\begin{eqnarray*}
 		&\eta_0 F\left[\Delta_{\x}{\bf u}\right]=-\eta_0 |{\bf k}|^2\tilde{\bf u}({\bf k}), \quad F\left[\nabla_{\x}p\right]=i{\bf k}\tilde{p}({\bf k}),& \\
 		&F\left[\nabla_{\x}\cdot \Sigma\right] =i\tilde{\Sigma}({\bf k}){\bf k}.&
 	\end{eqnarray*}
 	Using these relations one finds that $\eta_0|{\bf k}|^2\tilde{\bf u}({\bf k})+i{\bf k}\tilde{p}({\bf k})=i\tilde{\Sigma}({\bf k}){\bf k}$.  After rearranging the terms and using \eqref{eqn:tildep} we complete the proof of $(ii)$.
 	
 	To prove $(iii)$ we first observe that 
 	$F\left[D_{\x}({\bf u})\right]=\frac{i}{2} \left(\tilde{\bf u}{\bf k}^{*}+{\bf k} \tilde{\bf u}^{*}\right)$.  Plug the Fourier transform of ${\bf u}$ from $(ii)$ into  this expression to find
  \begin{eqnarray*}
 		F\left[D_{\x}({\bf u}) \right]&=&\frac{i}{2}\left(\tilde{\bf u}{\bf k}^{*}+{\bf k} \tilde{\bf u}^{*}\right)\\&=&-\frac{1}{2\eta_0 |{\bf k}|^2}\left((I-\frac{{\bf k}{\bf k}^{*}}{|{\bf k}|^2})\tilde{\Sigma}({\bf k}){\bf k}{\bf k}^{*}+{\bf k}{\bf k}^{*}\tilde{\Sigma}({\bf k})(I-\frac{{\bf k}{\bf k}^{*}}{|{\bf k}|^2})\right).
 		\end{eqnarray*}
 	Use the fact that $\tilde{\Sigma}$ is symmetric ($\tilde{\Sigma}=\tilde{\Sigma}^{*}$) to complete the proof of $(iii)$.
 \end{proof}
 
 \begin{remark} It is easily seen that $F\left[D_{\x}({\bf u})\right]$ does not depend on $|{\bf k}|$, since $F\left[D_{\x}({\bf u})\right]$ can be rewritten as
 	\begin{equation*}
 	F\left[D_{\x}({\bf u})\right]=-\frac{1}{\eta_0} \tilde{\Sigma} \frac{{\bf k}}{|{\bf k}|}\frac{{\bf k}^{*}}{|{\bf k}|} -\frac{2}{\eta_0}\frac{{\bf k}}{|{\bf k}|} \frac{{\bf k}^{*}}{|{\bf k}|} \tilde{\Sigma}\frac{{\bf k}}{|{\bf k}|} \frac{{\bf k}^{*}}{|{\bf k}|} +\frac{{\bf k}}{\eta_0 |{\bf k}|} \frac{{\bf k}^{*}}{|{\bf k}|}\tilde{\Sigma}.
 	\end{equation*}
 \end{remark}
 
 This subsection is concluded by summarizing the analytical expressions for the two main components of $F[{\bf H}]=F\left[\boldsymbol{\omega}\right]+BF\left[{\bf E}\right]$:
 \begin{eqnarray}
 \hspace{-.5in}&&F\left[{\bf E}\right]=-{\bf d}\times \left({\bf d} \times F\left[D_{\x}({\bf u})\right]{\bf d}\right) =F\left[D_{\x}({\bf u})\right]{\bf d}-{\bf d}{\bf d}^{*}F\left[D_{\x}({\bf u})\right]{\bf d}\label{FourierE}\\
 \hspace{-.5in}&&F\left[\boldsymbol{\omega}\right] = -\frac{1}{2}{\bf d} \times F\left[\nabla_{\x} \times {\bf u} \right] = -\frac{1}{2}{\bf d} \times \left[-i{\bf k} \times F[{\bf u}]\right],\label{Fourierc}
 \end{eqnarray}
 where $F[{\bf u}]$ and $F[D_{\x}({\bf u})]$ are given by Proposition \ref{propu}. 
 
 \subsection{\bf The form of asymptotic expansion in $B$}
 
 Recall the steady-state Liouville equation \eqref{lioumod} with the background terms substituted in:
 \begin{eqnarray}
 0&=&-\frac{3\gamma B}{2}\sin^2\beta\sin 2\alpha P_{{\bf d}}({\bf d})+\frac{\gamma}{2}(1+B\cos 2\alpha)\partial_{\alpha}P_{{\bf d}}({\bf d})\nonumber\\&&+
 \frac{\gamma B}{4}\sin 2\alpha\sin 2\beta \partial_{\beta}P_{{\bf d}}({\bf d})\nonumber\\
 &&+\frac{1}{N|V_L|}\int _{\mathcal{S}^2}\nabla_{{\bf d}}\cdot \left\{P_{{\bf d}}({\bf d})P_{{\bf d}}({\bf d}')<F[{\bf H}],(F[P_{\x}])^2>_{{\bf k}}\right\}dS_{{\bf d}'}.\label{ssliou}
 \end{eqnarray}
 We consider the asymptotic expansion in the Bretherton constant, $B\ll 1$, for the orientation distribution, $P_{{\bf d}}({\bf d})$, up to the second order:
 \begin{equation}\label{asympexp}
 P_{{\bf d}}(\alpha, \beta) = P_{{\bf d}}^{(0)}(\alpha, \beta) + P_{{\bf d}}^{(1)}(\alpha, \beta)B +  P_{{\bf d}}^{(2)}(\alpha, \beta)B^2 + O(B^3).
 \end{equation}
 
 Substituting \eqref{asympexp} into \eqref{ssliou} we get different equations at different orders of $B$. It is straightforward that $P_{{\bf d}}^{(0)}(\alpha,\beta) = \frac{1}{4\pi}$ (surface area of the unit sphere is $4\pi$) solves the equation at order $O(1)$.  We want to consider the asymptotic expansion about the uniform distribution because it has been extensively documented in theory and experiment that as the bacterium bodies become or spherical ($B \to 0$), then the distribution in angles is uniform \cite{RyaHaiBerZieAra11,Haines3}.  In the next two subsections, the linear order term $P_{{\bf d}}^{(1)}(\alpha, \beta)$ and quadratic order term $P_{{\bf d}}^{(2)}(\alpha, \beta)$ are computed.
 
 \subsection{\bf Contribution at $O(B)$}
 
 First, notice that $\nabla_{\bf d}\cdot \boldsymbol{\omega} (\x-\x',{\bf d},{\bf d}') =0$.
 Indeed, this follows from \eqref{div2} since  $\boldsymbol{\omega}\cdot {\bf d}=0$ and  
 the classical divergence of $\boldsymbol{\omega}$ with respect to ${\bf d}$ is zero (note that $\boldsymbol{\omega}={\bf d}\times A$, where $A=\nabla_{\x}\times {\bf u}$ does not depend on ${\bf d}$). 
 This observation implies $\nabla_{{\bf d}} \cdot F[{\bf H}] =B\nabla_{{\bf d}} \cdot F[{\bf E}]$.  
 
 Using this equality and expanding the divergence under the integral sign we rewrite \eqref{ssliou} as follows:
 \begin{align}
 0 &= \frac{\gamma}{2} \left[B\sin(2\alpha)\sin\beta\left(\cos\beta\partial_{\beta}P_{{\bf d}} - 3\sin\beta P_{{\bf d}}\right) + \left(1 + B\cos(2\alpha)\right)\partial_\alpha P_{{\bf d}} \right] \nonumber\\
 &\qquad + \frac{B}{N|V_L|} \int_{\mathcal{S}^2} P_{{\bf d}}({\bf d}')P_{{\bf d}}({\bf d})\langle\nabla_{{\bf d}} \cdot (F[{\bf E}({\bf d})])(F[P_{\x}])^2\rangle_{{\bf k}}dS_{{\bf d}'}\label{divdterm}\\
 &\qquad  + \frac{1}{N|V_L|} \int_{\mathcal{S}^2} \nabla_{{\bf d}}[P_{{\bf d}}({\bf d})]P_{{\bf d}}({\bf d}') \langle F[{\bf H}({\bf d})](F[P_{\x}])^2\rangle_{{\bf k}} dS_{{\bf d}'}.\nonumber
 \end{align}
 The first integral at $O(B)$ is
 \begin{equation}\label{OBint}
 \frac{1}{16\pi^2 N|V_L|} \int_{\mathcal{S}^2} \langle\nabla_{{\bf d}} \cdot (F[{\bf E}({\bf d})])(F[P_{\x}])^2\rangle_{{\bf k}}dS_{{\bf d}'},
 \end{equation}
 By switching the order of integration and noting $\int_{\mathcal{S}^2} \tilde{\Sigma} dS_{{\bf d}'} = \int_{\mathcal{S}^2} U_0[{\bf d}'({\bf d}')^* - I/3]dS_{{\bf d}'} = 0$ we obtain that \eqref{OBint} is zero using \eqref{FourierE} and \eqref{fourier_du}.
 
 Since both $\nabla_{{\bf d}} [P_{\bf d}({\bf d})]$ and $B{\bf E}$  are of the order $O(B)$, the second integral in \eqref{divdterm} at $O(B)$ is 
 $
 \frac{1}{4\pi N|V_L|}\int_{\mathcal{S}^2}\nabla_{\bf d}P_{\bf d}^{(1)}({\bf d}) \langle F[\boldsymbol{\omega}](F[P_{\x}])^2\rangle_{{\bf k}} dS_{{\bf d}'}
$
 which is also zero due to $\int_{\mathcal{S}^2} U_0({\bf d}'{\bf d}'- I/3) dS_{{\bf d}'} = 0$.
 
 Thus, the integral terms do not contribute to equation \eqref{divdterm} at order $O(B)$, and it has the following form:
 \begin{equation}
 0 = \frac{\gamma}{2} \left[-3P_{{\bf d}}^{(0)}\sin(2\alpha)\sin^2\beta + \partial_\alpha P_{{\bf d}}^{(1)}\right]. \label{simplifiedOB}
 \end{equation}
 After substituting $P_{{\bf d}}^{(0)} = \frac{1}{4\pi}$ and solving \eqref{simplifiedOB}, one finds that 
 \begin{equation}\label{pd_at_order_1}
 P_{{\bf d}}^{(1)}(\alpha, \beta) = -\frac{3}{8\pi} \sin^2\beta \cos(2\alpha).
 \end{equation}  
 {\it Since the integral terms are zeros at order $O(B)$, the contribution due to interactions does not appear at order  $O(B)$ and thus the only contribution is due to the background flow.}  
 
 It will be shown later that up to $O(B)$ the contribution to the effective viscosity by the bacteria is zero.  This will shed light on the fact that interactions are {\it necessary} to see the decrease in the effective viscosity and the background flow alone is insufficient. Note that even though this is the contribution due to the background flow the strain rate $\gamma$ is not present.  
   Therefore, the magnitude of the flow will not have an effect on the longtime limit of the effective viscosity at $O(B)$.  However, once the terms at the next order are computed one observes a competition develop between the background flow and the flow due to inter-bacterial interactions.  In this case the magnitude of the shear $\gamma$ becomes important.
 
 \subsection{\bf Contribution at $O(B^2)$}\label{contr_at_B2}
 
 Consider terms in \eqref{divdterm} of order $O(B^2)$:
 \begin{align}
 	0 =&  \frac{\gamma}{2} \sin(2\alpha)\sin\beta \cos\beta \partial_{\beta} P_{{\bf d}}^{(1)}({\bf d}) - \frac{3\gamma}{2}\sin(2\alpha)\sin^2(\beta)P_{{\bf d}}^{(1)}({\bf d}) \nonumber \\\quad&+ \frac{\gamma}{2}\partial_{\alpha} P_{{\bf d}}^{(2)}({\bf d}) + \frac{\gamma}{2}\cos(2\alpha)\partial_{\alpha}P_{{\bf d}}^{(1)}({\bf d})  \nonumber\\
 	\quad &+ \frac{1}{4\pi N|V_L|} \nabla_{{\bf d}} \cdot \int_{\mathcal{S}^2} \langle F[{\bf E}]F[P_{\x}]^2 \rangle_{{\bf k}} P_{{\bf d}}^{(1)}({\bf d}')dS_{{\bf d}'}\nonumber \\ \quad&+  \frac{1}{4\pi N|V_L|} \int_{\mathcal{S}^2} \nabla_{{\bf d}}[P^{(2)}_{{\bf d}}({\bf d})] \langle F[\boldsymbol{\omega}](F[P_{\x}])^2\rangle_{{\bf k}} dS_{{\bf d}'}\label{OB2-int}\\
 	\nonumber \quad &+ \frac{1}{4\pi N|V_L|} \int_{\mathcal{S}^2} \nabla_{{\bf d}}[P^{(1)}_{{\bf d}}({\bf d})] \langle F[{\bf E}](F[P_{\x}])^2\rangle_{{\bf k}} dS_{{\bf d}'} \nonumber \\\quad&+  \frac{1}{N|V_L|} \int_{\mathcal{S}^2} \nabla_{{\bf d}}[P^{(1)}_{{\bf d}}({\bf d})]P^{(1)}_{{\bf d}}({\bf d}') \langle F[\boldsymbol{\omega}](F[P_{\x}])^2\rangle_{{\bf k}} dS_{{\bf d}'}.\nonumber
 	\end{align}
 
 Denote the four integral terms in equation \eqref{OB2-int} by $\text{I}_1$, $\text{I}_2$, $\text{I}_3$ and $\text{I}_4$, respectively. The following equalities hold: 
 \begin{eqnarray}
 \text{I}_1&=&\frac{ U_0}{40\pi \eta_0 N|V_L|}\left(A\sin^2\beta\cos(2\alpha) + C\sin^2\beta\sin(2\alpha)\right),\nonumber\\
 \text{I}_2&=&\text{I}_3\;=\;0,\nonumber\\
 \text{I}_4&=&\frac{3U_0}{10\pi\eta_0 N|V_L|}D\sin(2\alpha)\sin^2\beta, \nonumber
 \end{eqnarray}  
 where constants $A$, $C$, and $D$ are defined as follows
 \begin{eqnarray}
 &A :=\frac{1}{2}\int \sin^2(2\theta)\hat{P}^2_{12}k^2dk d\theta, ~C := -\frac{1}{2}\int \sin(4\theta)\hat{P}^2_{12} k^2dk d\theta,&\nonumber
 \\&& \label{coefs_ACD}
 \\
 &D := \int \cos(\theta)\sin(\theta) \hat{P}^2_{12} k^2dk d\theta.&\nonumber
 \end{eqnarray}
 Here $\hat{P}_{12}$ is from \eqref{ass_a3_formula}, and we use spherical coordinates in the Fourier space $(k=|{\bf k}|,\theta, \phi)$.  The calculations of $\text{I}_i$ can be found in Appendix \ref{KinE-OB2}.

 After substitution of the expressions for each $\text{I}_i$, we get the following equation for $P_{\bf d}^{(2)}({\bf d})$: 
 \begin{align}
 0 &= \frac{\gamma}{2} \sin(2\alpha)\sin\beta \cos\beta \partial_{\beta} P_{\bf d}^{(1)}({\bf d}) - \frac{3\gamma}{2}\sin(2\alpha)\sin^2(\beta)P_{\bf d}^{(1)}({\bf d})\nonumber \\ &\qquad+ \frac{\gamma}{2}\partial_{\alpha} P_{\bf d}^{(2)}({\bf d}) + \frac{\gamma}{2}\cos(2\alpha)\partial_{\alpha}P_{\bf d}^{(1)}({\bf d})\label{eqn:frown}\\
 & \qquad +\frac{ U_0}{40\pi \eta_0 N|V_L|}\left(A\sin^2\beta\cos(2\alpha) + C\sin^2\beta\sin(2\alpha)\right)\nonumber\\
 & \qquad+ \frac{3U_0}{10\pi\eta_0 N|V_L|}D\sin^2\beta\sin(2\alpha).\nonumber
 \end{align}
 
 Based on the form of the equation \eqref{eqn:frown}, the following representation is used to find $P_{\bf d}^{(2)}({\bf d})$:
 \begin{equation}\label{ansatzP2}
 P_{\bf d}^{(2)}({\bf d}) = C_1\sin^4\beta\cos(4\alpha) + C_2\sin^2\beta\cos(2\alpha) + C_3\sin^2\beta\sin(2\alpha).
 \end{equation}
 In order to find each $C_i$ substitute \eqref{ansatzP2} into \eqref{eqn:frown}:
 \begin{align*}
 0 &= \left[\frac{3\gamma}{8\pi} - 2\gamma C_1\right]\sin(4\alpha)\sin^4\beta  + \left[ \gamma C_3  + \frac{ U_0A}{40\pi \eta_0 N|V_L|} \right]\sin^2\beta\cos(2\alpha)\\
 &  \qquad + \left[-\gamma C_2+ \frac{ U_0C}{40\pi \eta_0 N|V_L|}+\frac{3U_0D}{10\pi\eta_0 N|V_L|} \right] \sin^2\beta\sin(2\alpha).
 \end{align*}
 Since the factors are linearly independent, each coefficient is zero and, thus, we find the $C_i$'s:
 \begin{eqnarray*}
 	&C_1 = \frac{3}{16\pi},\;\;\;C_2 = -\frac{U_0(C + 12D)}{40\gamma\pi \eta_0 N|V_L|},\;\;\;C_3 = -\frac{ U_0A}{40\gamma \pi \eta_0 N|V_L|}.&
 \end{eqnarray*}
 Using these coefficients one obtains an explicit formula for the orientation distribution up to $O(B^3)$:
 \begin{align}
 P_{\bf d}(\alpha,\beta) &= \frac{1}{4\pi} - \frac{3}{8\pi} \sin^2\beta \cos(2\alpha)B+  \biggl[\frac{3}{16\pi}\sin^4\beta\cos(4\alpha)\nonumber \\ &\qquad -U_0\frac{C + 12D}{40\gamma\pi \eta_0 N|V_L|}\sin^2\beta\cos(2\alpha) \label{formpd} \\  
 & \qquad   -\frac{ U_0A}{40\gamma \pi \eta_0 N|V_L|} \sin^2\beta\sin(2\alpha)\biggr]B^2 + O(B^3). \nonumber
 \end{align}
 Formula \eqref{formpd} is the main result of Section \ref{derivodens}. Since $A$, $C$, and $D$ contain $\hat{P}_{12}$, all the spatial information is embedded in these coefficients.  In particular,  we found the lowest order (in $B$) contribution of hydrodynamic interactions to the $P_{\bf d}({\bf d})$ occurs at $O(B^2)$. In the following section, the contribution of hydrodynamic interactions to the effective viscosity is computed as well as the change in the effective normal stress coefficients.  The combination of these two quantities will describe the total effect of hydrodynamic interactions on the rheological behavior of the bacterial suspension.
 
 \section{\bf Explicit formula for the effective viscosity}\label{evform}
 
 Using the expression for the orientation distribution, $P_{\bf d}({\bf d})$ defined in \eqref{formpd}, and the formula for the effective viscosity for dipoles in a suspension \eqref{evsep}, we compute the contribution to the effective viscosity due to interactions: 
 \begin{equation}\label{eqn:evformula}
 \eta^{\text{int}}:=\frac{\eta-\eta_0}{\eta_0} 
 =  -\frac{U_0^2B^2\rho^2 \hat{A}}{75\gamma^2 \pi \eta_0}< 0.
 \end{equation}
 where $\hat{A} = \frac{1}{N^2}A \sim O (1)$ and the equality holds up to order $O(B^3)$.  The quantity $\eta^{\text{int}}$ behaves like $\rho^2$ in concentration (cf. \cite{BatGre72} where an expansion for the effective viscosity to order two in concentration is derived for passive spheres corresponding to pairwise interactions).  As an additional check of consistency, consider the dimensions of the final quantity.  The dipole moment $[U_0] = \frac{\text{kg} \cdot \text{m}^2}{\text{s}^2}$, both the Bretherton constant $B$ and $\hat{A}$ are dimensionless, the concentration/number density $[\rho] = \frac{1}{\text{m}^3}$, the ambient viscosity $[\eta_0] = \frac{\text{kg}}{\text{m}\cdot \text{s}}$, and the strain rate $[\gamma] = \frac{1}{\text{s}}$ resulting in $\eta^{\text{int}}$ being dimensionless. 
 
 In addition, the orientation distribution $P_{\bf d}({\bf d})$ from \eqref{formpd}  can be used to compute the effective first and second dipolar normal stress coefficients $N_{12} = \frac{\Sigma^d_{11} - \Sigma^d_{22}}{\gamma^2}$ and $N_{23} = \frac{\Sigma^d_{22} - \Sigma^d_{33}}{\gamma^2}$ to investigate the effect of hydrodynamic interactions.   The main advantage of the mathematical model is that the computation of the effective normal stress coefficients is straightforward in contrast to experiment where its measurement can be quite complicated \cite{FriHey88}. These coefficients can provide important information about the suspension.  For example,  the ratio of the first normal stress to the viscosity determines the effective relaxation time \cite{FriHey88}.  Also, phenomena such as extrudate swelling \cite{AbdHasBir74} and secondary flow \cite{RamLei08} are important in many technological applications.   A simple calculation shows that
 \begin{eqnarray}
 N_{12} &=& \frac{\Sigma^d_{11} - \Sigma^d_{22}}{\gamma^2} = \frac{U_0\rho}{\gamma^2}\left[-\frac{2}{5} - \frac{2U_0\rho(C + 12D)}{75\gamma\pi\eta_0}B^2\right]\label{eqn:normstress1-1}\\
 N_{13} &=& \frac{\Sigma^d_{22} - \Sigma^d_{33}}{\gamma^2} = \frac{U_0\rho}{\gamma^2}\left[\frac{1}{5} + \frac{U_0\rho(C + 12D)}{75\gamma\pi\eta_0}B^2\right].\label{eqn:normstress2-1}
 \end{eqnarray}
 The approximations are valid for $B \ll 1$, so for pushers ($U_0 < 0$) $N_{12} > 0$ and $N_{23} < 0$ where as for pullers ($U_0 > 0$) $N_{12} < 0$ and $N_{23} > 0$.  Both results are consistent with the predictions in \cite{Haines3,Saintillan2} while providing additional information about the concentration dependence.  The effective normal stress coefficients grow linearly with concentration in the presence of interacting bacteria; however, the fact that the normal stresses of active suspensions are non-zero in the case of a planar shear flow indicate the {\it emergence of non-Newtonian behavior}. One sees in \eqref{eqn:normstress1-1}-\eqref{eqn:normstress2-1} that as the shear rate $\gamma \to \infty$ the normal stresses approach zero indicating the dominance of the background flow on the suspension overwhelming any contribution from interactions.  
 
 \subsection{\bf Mechanisms required for the decrease in the effective viscosity}\label{mechev}
 
 In this subsection, the mechanisms that lead to a decrease in the effective viscosity are investigated.  These same mechanisms are shown in \cite{RyaSokBerAra13} to be responsible for collective motion and large scale structure formation in suspensions of pushers.  Our mathematical analysis provides insight beyond experiment.  Formula \eqref{eqn:evformula} reveals that elongation of bacteria, self-propulsion, and interactions are all required to observe a decrease in the effective viscosity; namely, for spherical bacteria $(B = 0)$ the net change in the  effective viscosity is zero.  In addition, active bacteria are required, since $U_0 \sim f_p = 0$ results in no change in the effective viscosity where $f_p$ is the propulsion force.  Finally, if the spatial density $P_{\bf x}({\bf x})$ is near uniform, then $\hat{A} = \frac{1}{2N^2}\int \sin^2(2\theta) \hat{P}_{12}^2 d{\bf k} \approx 0$ resulting in no change in the effective viscosity.  
 
 In the limit $\gamma \to \infty$ the contribution to motion of bacteria due to shear dominates the contribution due to interactions with $P_{\bf d}({\bf d})$ maximized at $\alpha = \pi/2$ and $\beta = \pi/2$ (alignment with $y$-axis). This is analogous to the passive case where bacteria in a planar shear flow tend to align with the direction where the fluid exerts the least amount of torque on the bacterium body. Therefore, confirming our main conclusion that in order to exhibit a decrease in the effective viscosity active, elongated bacteria whose interactions result in a non-uniform distribution in space are needed.  
 
 \subsection{\bf Effective noise conjecture}\label{effnoise}
 
 In this subsection, the results herein involving a semi-dilute suspension of point force dipoles are compared to the previous result for a dilute suspension of prolate spheroids with propulsion modeled as a point force \cite{Haines3}.   Thus, the only contribution to bacterial motion is the background flow.  In \cite{Haines3}, finite size bacteria are taken as spheroids with a point force ($\delta$ function) accounting for self-propulsion.  In addition, each bacterium experiences a random reorientation referred to as tumbling.   Biologically tumbling corresponds to a reorientation of  a bacterium in hopes of finding a more favorable (nutrient rich) environment.  Typically in experiment this is observed when the concentration of oxygen is low.  Thus, bacteria enter a more dormant state resulting in a lower swimming speed and an increased tumbling rate \cite{SokAra12}. 
 
 Since only the term containing $\hat{A}$ contributes to the effective viscosity, one can choose to match the coefficient of this term 
 \begin{align*}
 P_{\bf d}^{int}&= \frac{1}{4\pi} -\frac{3}{8\pi}B\cos(2\alpha)\sin^2\beta  +\frac{3}{16\pi}B^2\sin^4\beta\cos(4\alpha)\\
 \quad &   -U_0\rho\frac{C + 12D}{40\gamma\pi \eta_0}B^2\sin^2\beta\cos(2\alpha) -\frac{ U_0\rho \hat{A}}{40\gamma \pi \eta_0} B^2\sin^2\beta\sin(2\alpha)+ O(B^3)
 \end{align*}
 with the corresponding coefficient in the derivation by {\it Haines et al.} \cite{Haines3}, which is quadratic in the diffusion strength $D$.  To make the formulas for the effective viscosity identical, the strength of the effective noise/diffusion (tumbling) is chosen to be
 \begin{equation*}
 \hat{D} := \frac{-15\eta_0\gamma^2 + \sqrt{225\eta_0^2\gamma^4 - \hat{A}^2B^2\gamma^2\rho^2U_0^2}}{12\hat{A}B\rho U_0} > 0,
 \end{equation*}
 (since $U_0 < 0$ for pushers).  Observe that $\hat{D}$, chosen in this way, depends only on the physical parameters present in the problem and the same effective viscosity as the dilute case studied in \cite{Haines3} is found.   This $\hat{D}$ is referred to as the \emph{effective noise} and the phenomenon where stochasticity arises from a completely deterministic system is called \emph{self-induced noise}.  A future work may seek to explain this phenomenon rigorously using mathematical analysis. One heuristic idea is that the periodic (deterministic) Jeffrey orbits are destroyed by interactions resulting in stochastic behavior.
 
 Some conclusions about this effective noise can be made that ensure its consistency with physical reality.  As bacteria become spheres $B \to 0, \hat{D} \to 0$ resulting in no change in the effective viscosity consistent with \cite{Haines3}.  Also as the strain/shear rate $\gamma \to \infty, \hat{D} \to 0$.  This is physically intuitive, because as the shear rate becomes large its contribution dominates that due to hydrodynamic interactions resulting in behavior that resembles that of a passive suspension.  Thus, the contribution to the effective viscosity due to hydrodynamic interactions is zero.  Finally, we compare our results with direct simulations for the coupled PDE/ODE system composed of Stokes PDE \eqref{stokeseqnsingle} and \eqref{rij-0}-\eqref{dij}.
 
 \subsection{\bf Comparison to numerical simulations}\label{compsim}

 In this section, the accuracy of the derived formula is tested by comparing it to recent numerical simulations.  The numerical procedure is outlined in \cite{RyaHaiBerZieAra11}.  These simulations are parallel in nature allowing them to be carried out on GPUs for greater efficiency.
 
 \begin{figure}[h!]
 	\centering
 	\includegraphics[width=2.5in]{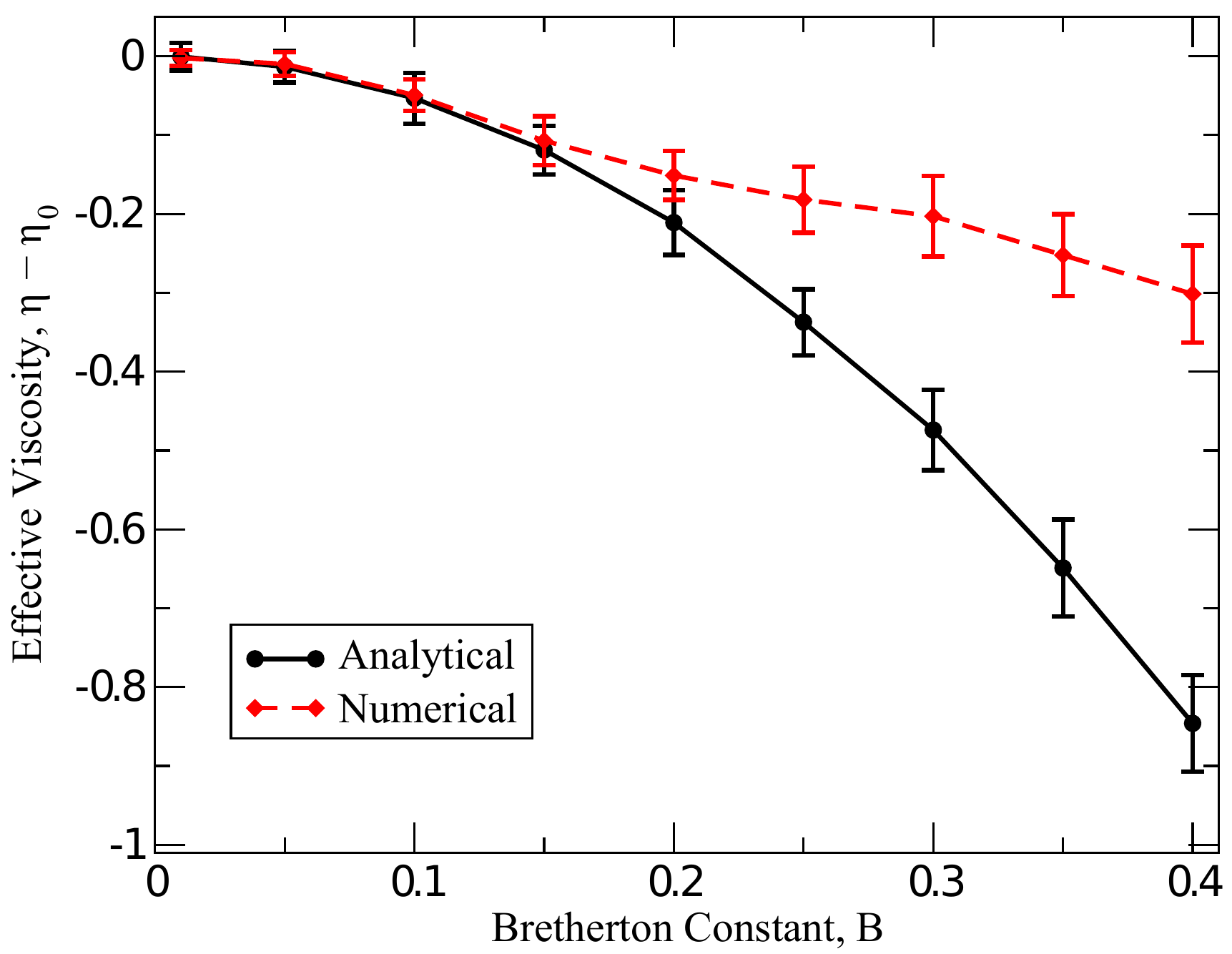}
 	\caption{\footnotesize Comparison of the formula for the effective viscosity with numerical simulations as bacterium shape changes through the Bretherton constant $B$ for a fixed volume fraction $\Phi = .02$ and shear rate $\gamma = .1$.  The vertical bars represent the error in the numerical approximation.  Error in the analytical solutions comes from the numerical estimation of $\hat{A}$.}
 	\label{fig2a-1}
 \end{figure}
 
 Figure~\ref{fig2a-1} shows how both the formula and numerical computations of viscosity change with bacterium shape as all other system parameters remain fixed.  Here shape is accounted for through the Bretherton constant $B = \frac{b^2 - a^2}{b^2+a^2}$ where $b$ is the length of the major axis and $a$ is the length of the minor axis of the ellipsoid representing a bacterium.   First, notice that in both the formula and numerics the contribution to the effective viscosity  due to hydrodynamic interactions decreases with $B$ (increasing in magnitude).  This is due to the fact that as bacteria become more asymmetrical as $B \to 1$ the inter-bacterial hydrodynamic interactions have a greater effect on alignment.  This alignment increases the magnitude of the dipolar stress leading to an even bigger decrease in the effective viscosity.  The agreement between the analytical formula and numerical simulations breaks down as $B$ becomes large, but this is expected due to the fact that the asymptotic formula is valid in the regime where $B \ll 1$ (small non-sphericity).
 
 Figure~\ref{fig2a-2} shows how both the formula and numerical computations of viscosity change with the concentration of the suspension as all other system parameters remain fixed.   It is seen that as concentration increases the effective viscosity decreases.  This can easily be explained by the fact that as the concentration increases, the motion of bacteria begins to be dominated by inter-bacterial hydrodynamic interactions.  This leads to collective motion of the bacteria in the suspension, which subsequently decreases the viscosity. The two results begin to diverge near volume fraction $\Phi \approx .02$.  The reason the numerical simulations do not decrease as much is that collisions are taken into account.  It was shown in \cite{RyaHaiBerZieAra11} that the stress due to collisions is a positive contribution to the effective viscosity that is not captured by the formula.  This contribution begins to become important beyond the dilute regime ($\Phi > 2\%$).
 
 \begin{figure}[h!]
 	\centering
 	\includegraphics[width=2.5in]{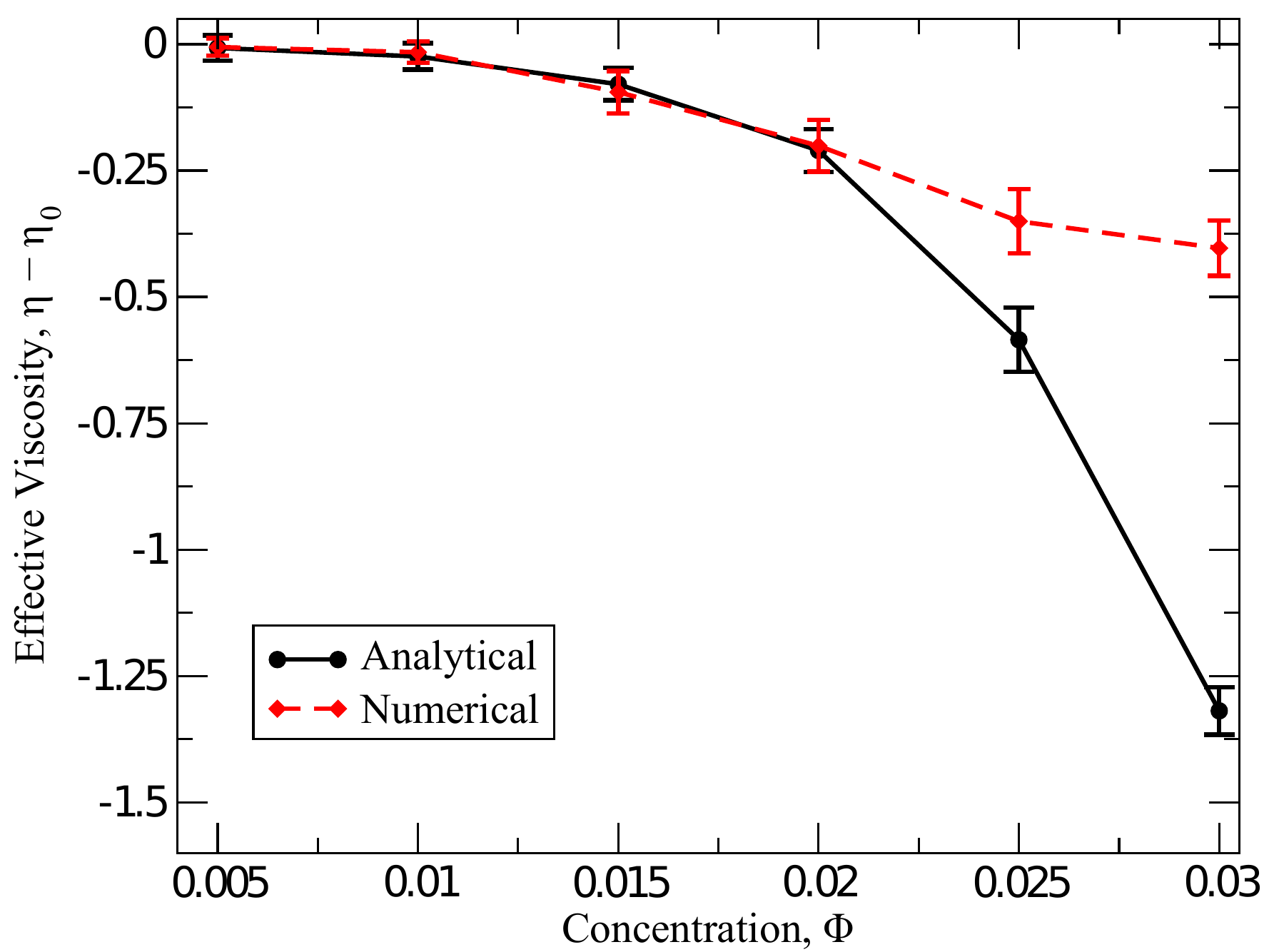}
 	\caption{\footnotesize Comparison of the formula for the effective viscosity with numerical simulations as the volume fraction $\Phi$ changes for a fixed shape $B = .2$ and shear rate $\gamma = .1$.}
 	\label{fig2a-2}
 \end{figure}
 
 Figure~\ref{fig2a-3} shows how both the formula and numerical computations of viscosity change with the shear rate of the background flow in the suspension as all other system parameters remain fixed.   As expected when the shear rate is large in both the analytical formula and simulations, the decrease in viscosity due to hydrodynamic interactions is negligible.  This is due to the fact that the background flow dominates motion of bacteria wiping out the effects of inter-bacterial interactions and stopping any collective structures from forming.  When the shear rate is too small the effective viscosity becomes unbounded.  This makes sense given that at small shear rate the system becomes almost non-dissipative and thus the effective viscosity is not well-defined.  This can easily be seen by noting that the viscosity is the ratio of the stress over the strain and when the strain is essentially zero the effective viscosity becomes  unbounded.  All three plots show good qualitative agreement with each other, experimental observation, and physical intuition.
 
 \begin{figure}[h!]
 	\centering
 	\includegraphics[width=2.5in]{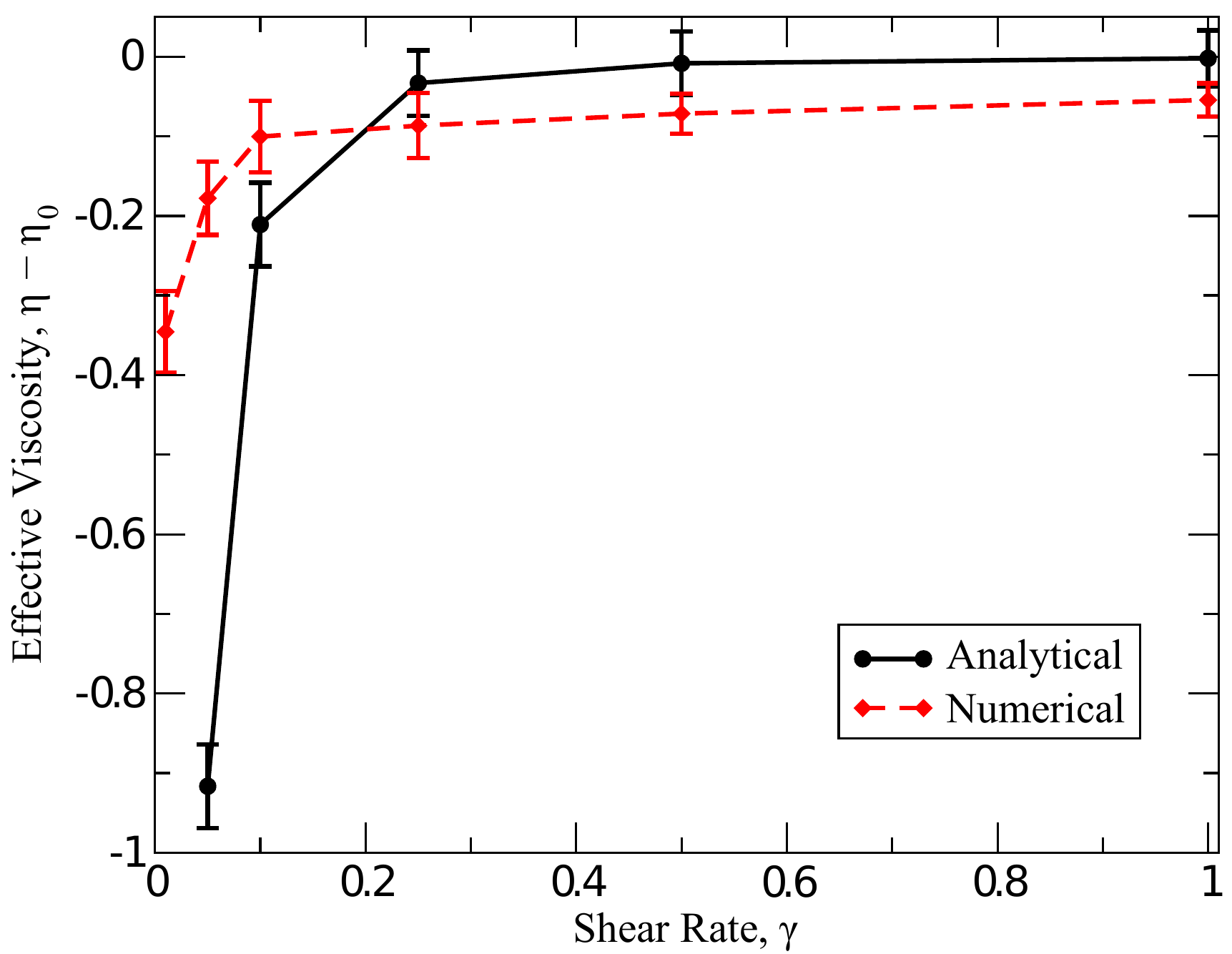}
 	\caption{\footnotesize Comparison of the formula for the effective viscosity with numerical simulations as the shear rate $\gamma$ changes for a fixed volume fraction $\Phi = .02$ and shape $B=.2$.}
 	\label{fig2a-3}
 \end{figure}

 \section{\bf Global solvability of the kinetic equation}\label{kin_solvable}
 
In this section, we study solvability of the main nonlinear integro-differential equation \eqref{cons2} governing the evolution of the orientation distribution. Primarily we are interested in existence, uniqueness, and the regularity properties of solutions of \eqref{cons2}. 

First, we note that \eqref{cons2} is an equation of the form: 
 \begin{equation}\label{fpeqn-nodiff}
 \hspace{-.3in}\partial_t P_{\bf d} = -\nabla_{{\bf d}} \cdot \left(\left[ \int_{\mathcal{S}^2} K({\bf d}, {\bf d}')P_{{\bf d}}({\bf d}') dS_{{\bf d}'}+{\bf k}({\bf d})\right] P_{{\bf d}}\right)+ D\Delta_{{\bf d}} P_{{\bf d}}.
 \end{equation}
 Indeed, one can obtain \eqref{cons2} by substituting
 \begin{eqnarray}
 K({\bf d}, {\bf d}')=\boldsymbol{\omega}({\bf d},{\bf d}') + B{\bf E}({\bf d},{\bf d}'),\;\;
 {\bf k}({\bf d})= \boldsymbol{\omega}^{BG}({\bf d}) + B{\bf E}^{BG}({\bf d}).\label{def_of_k}
 \end{eqnarray}
 Both $K$ and ${\bf k}$ from \eqref{def_of_k} are infinitely smooth functions of ${\bf d}$. Therefore, in this section we consider \eqref{fpeqn-nodiff} for the general case of smooth $K$ and ${\bf k}$.
 
 We follow the standard procedure for the analysis of the well-posedness of the evolution PDEs (e.g., see \cite{Eva98,Lio69,FroLiu12}). In particular, we introduce the notion of a weak solution. By $H^s$ ($s\in \mathbb R$) we denote the corresponding Sobolev spaces.
 
 \begin{definition}
 	For $T >0$, the function $f$  which belongs to space $\mathcal{H}$ given by \begin{equation}
 	\label{weak_class}
 	\mathcal{H}=L^2((0,T), H^1(\S)) \cap H^1((0,T),H^{-1}(\S))
 	\end{equation} is  {\it a weak solution} of \eqref{fpeqn-nodiff} if for almost all $t \in [0,T]$ and all $h \in H^{1}(\S)$
 	\begin{equation}\label{eqnc}
 	\langle \partial_t f, h \rangle = -D\langle \nabla_{\bf d} f, \nabla_{\bf d} h \rangle + \langle f, \left[\int_{\S} K({\bf d}, {\bf d}')f dS_{{\bf d}'} + {\bf k}({\bf d})\right] \cdot \nabla_{\bf d} h \rangle,
 	\end{equation}
 	where $\langle \cdot, \cdot \rangle$ is the duality product for distributions on the unit sphere $\S$.
 \end{definition}
   
   \begin{remark} 
   	According to the well-known embedding (see \cite{Sim87}) the fact that  a weak solution $f$ belongs to $\mathcal{H}$ implies that it is continuous with respect to $t\in [0,T]$ with values in $L^2(\mathcal{S}^2)$, i.e.,  $f\in C([0,T];L^2(\mathcal{S}^2))$.
   	\end{remark}
   
 \begin{definition}
 	A function $f\in C([0,T];L^2(\mathcal{S}^2))$ is called {\it positive in distributional sense} if  
 	\begin{equation}
 	\langle f,h\rangle \geq 0 
 	\end{equation}
 	for all $t \in [0,T]$ and all $h \in C(\mathcal{S}^2)$ such that $h({\bf d})\geq 0$ for all ${\bf d}\in \mathcal{S}^2$.  
 	\end{definition}
 The following theorem is the main result of this section. 
 
 \begin{theorem}\label{thm_galerkin}
 	Assume $f_0\in L^2(\mathcal{S}^2)$, $K\in C^2(\mathcal{S}^2\times \mathcal{S}^2)$, ${\bf k}\in C^2(\mathcal{S}^2)$ and $T>0$. Assume also that $f_0$ is positive in the distributional sense. Then the following statements hold:
 	\begin{list}{}{}
 		\item [{\it (i)}] There exists the unique weak solution of \eqref{fpeqn-nodiff} $f$ on interval $[0,T]$ such that $f|_{t=0}=f_0$. The weak solution $f$ is positive. It continuously depends on initial conditions, i.e., there exists a positive constant $C>0$ such that 
 		\begin{equation}
 		\label{cont_dep_on_ic}
 		\sup\limits_{t\in[0,T]} \|f^{(1)}-f^{(2)}\|_{L^2(\mathcal{S}^2)}\leq C \|f^{(1)}_0-f^{(2)}_0\|_{L^2(\mathcal{S}^2)},
 		\end{equation}
 		where $f^{(1)}$ and $f^{(2)}$ are weak solutions with initial conditions $f^{(1)}|_{t=0}~=~f^{(1)}_0$ and $f^{(2)}|_{t=0}=f^{(2)}_0$, respectively.  
 	
 	\medskip 
 	
	 	\item [{\it (ii)}] For all $s\geq 0$ if $f_0\in H^s(\mathcal{S}^2)$, then $f\in C([0,T];H^{s}(\mathcal{S}^2))$. \\If $f_0~\in~ C^{\infty}(\mathcal{S}^2)$, then $f~\in~ C([0,T];C^{\infty}(\mathcal{S}^2))$.
	 	
	\medskip 
	 
	 	\item[{\it (iii)}] For all $s\geq 0$ if $f_0\in H^s(\mathcal{S}^2)$, then
	 	for all $m \geq 0$ and $t>0$:
	 	\begin{equation}\label{regul_ineq}
	 	\| f(t)\|^2_{H^{s+m}(\mathcal{S}^2)} \leq C\left(1 + \frac{1}{t^m}\right),
	 	\end{equation}
	 	where the constant $C$ depends only on $\| f_0\|_{H^s(\S)}$, $s$, and $m$.  In particular,
	 	$$f\in C((0,\infty); H^p(\mathcal{S}^2))$$ for all $p\in \mathbb Z$. 
 	\end{list}
 \end{theorem}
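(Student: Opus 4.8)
The plan is to construct the solution by the Faedo--Galerkin method in the basis of spherical harmonics $\{Y_\ell^m\}$, which is orthonormal in $L^2(\mathcal{S}^2)$, diagonalizes the Laplace--Beltrami operator ($-\Delta_{\bf d}Y_\ell^m=\ell(\ell+1)Y_\ell^m$) and is adapted to every $H^s(\mathcal{S}^2)$. Projecting \eqref{fpeqn-nodiff} onto $\mathrm{span}\{Y_\ell^m:\ell(\ell+1)\le N\}$ gives an ODE system for the coefficients whose right-hand side is \emph{quadratic} in the unknown --- the interaction integral $\int_{\mathcal{S}^2}K(\cdot,{\bf d}')f_N\,dS_{{\bf d}'}$ is linear in $f_N$ and it multiplies $f_N$ --- hence locally Lipschitz, so Picard--Lindel\"of yields a unique local $f_N$. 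Testing with $f_N$ and integrating by parts on $\mathcal{S}^2$ gives $\tfrac12\tfrac{d}{dt}\|f_N\|_{L^2}^2=-D\|\nabla_{\bf d}f_N\|_{L^2}^2+\langle b_Nf_N,\nabla_{\bf d}f_N\rangle$ with $b_N=\int K(\cdot,{\bf d}')f_N\,dS_{{\bf d}'}+{\bf k}$; since $K,{\bf k}\in C^2$ one has $\|b_N\|_{L^\infty}\le C(1+\|f_N\|_{L^1})\le C(1+\|f_N\|_{L^2})$, so Young's inequality absorbs the drift term into the dissipation up to $C(1+\|f_N\|_{L^2}^2)\|f_N\|_{L^2}^2$, producing a \emph{local}-in-time bound for $\|f_N\|_{L^2}$, for $\int_0^\tau\|\nabla_{\bf d}f_N\|_{L^2}^2$, and (from the equation) for $\partial_tf_N$ in $L^2(0,\tau;H^{-1})$. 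Aubin--Lions then gives strong $L^2((0,\tau)\times\mathcal{S}^2)$ convergence of a subsequence, which is exactly what is needed to pass to the limit in the \emph{bilinear} term $b_Nf_N$; the limit is a weak solution in $\mathcal{H}$ on $[0,\tau]$. For uniqueness and \eqref{cont_dep_on_ic}, the difference $w=f^{(1)}-f^{(2)}$ solves the same equation with extra source $\nabla_{\bf d}\cdot\big((b[f^{(1)}]-b[f^{(2)}])f^{(2)}\big)$, and $\|b[f^{(1)}]-b[f^{(2)}]\|_{L^\infty}\le\|K\|_{L^\infty}\|w\|_{L^1}\le C\|w\|_{L^2}$; testing with $w$, using Young to absorb $\|\nabla_{\bf d}w\|_{L^2}$ into $D\|\nabla_{\bf d}w\|_{L^2}^2$ and the a priori bound on $\|f^{(2)}\|_{L^2}$, gives $\tfrac{d}{dt}\|w\|_{L^2}^2\le C\|w\|_{L^2}^2$, hence Gronwall.

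\emph{Positivity and global existence --- the crux.} The local $L^2$ bound does not close globally, because $\|b\|_{L^\infty}$ is controlled only by $\|f\|_{L^2}$ superlinearly, so a priori blow-up is not excluded. To fix this I would use positivity. On the interval of existence, insert $h=f^-(t):=\max(-f(t),0)\in H^1(\mathcal{S}^2)$ into the weak formulation (legitimate for $f\in\mathcal{H}$ by the chain-rule identity $\langle\partial_tf,f^-\rangle=-\tfrac12\tfrac{d}{dt}\|f^-\|_{L^2}^2$); since $\nabla_{\bf d}f^-=-\mathbf 1_{\{f<0\}}\nabla_{\bf d}f$ one obtains $\tfrac{d}{dt}\|f^-\|_{L^2}^2=-2D\|\nabla_{\bf d}f^-\|_{L^2}^2-\langle\nabla_{\bf d}\cdot b,(f^-)^2\rangle\le\|\nabla_{\bf d}\cdot b\|_{L^\infty}\|f^-\|_{L^2}^2$, and since $f_0\ge0$ (so $f_0^-=0$ in $L^2$) Gronwall forces $f^-\equiv0$, i.e.\ $f\ge0$. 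The divergence form of \eqref{fpeqn-nodiff} then gives $\|f(t)\|_{L^1}=\int_{\mathcal{S}^2}f(t)=\int_{\mathcal{S}^2}f_0=\|f_0\|_{L^1}$ for all $t$, hence $\|b(t)\|_{L^\infty}\le M:=\|K\|_{L^\infty}\|f_0\|_{L^1}+\|{\bf k}\|_{L^\infty}$ is \emph{time-independent}. Re-running the basic energy estimate now gives $\tfrac{d}{dt}\|f\|_{L^2}^2\le-D\|\nabla_{\bf d}f\|_{L^2}^2+(M^2/D)\|f\|_{L^2}^2$, so $\|f(t)\|_{L^2}^2\le e^{M^2t/D}\|f_0\|_{L^2}^2$ on the whole interval of existence; this rules out blow-up, and the standard continuation argument extends the solution to $[0,T]$. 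This proves (i).

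For (ii) and (iii) I would run higher-order energy estimates. Applying $\Lambda^s:=(I-\Delta_{\bf d})^{s/2}$ to \eqref{fpeqn-nodiff} and testing with $\Lambda^sf$ gives $\tfrac12\tfrac{d}{dt}\|f\|_{H^s}^2+D\|f\|_{H^{s+1}}^2\le C\|f\|_{H^s}^2+|\langle\Lambda^{s-1}\nabla_{\bf d}\cdot(bf),\Lambda^{s+1}f\rangle|\le C\|f\|_{H^s}^2+\|bf\|_{H^s}\|f\|_{H^{s+1}}$; the Moser/Kato--Ponce product inequality on $\mathcal{S}^2$ gives $\|bf\|_{H^s}\le C(\|b\|_{L^\infty}\|f\|_{H^s}+\|b\|_{H^s}\|f\|_{L^\infty})$, where $\|b\|_{H^s}\le C\|K\|_{C^s}\|f\|_{L^1}+\|{\bf k}\|_{H^s}\le C$ (using $L^1$-conservation from part (i)) and $\|b\|_{L^\infty}\le M$, while $\|f\|_{L^\infty}$ is handled by the Gagliardo--Nirenberg inequality $\|f\|_{L^\infty}\le C\|f\|_{L^2}^{s/(s+1)}\|f\|_{H^{s+1}}^{1/(s+1)}$ (for $0\le s\le1$ multiplication by a $W^{1,\infty}$ function is bounded on $H^s$ and this term does not even arise); in all cases the power of $\|f\|_{H^{s+1}}$ so produced is $<2$, so Young absorbs it into $D\|f\|_{H^{s+1}}^2$ and one is left with $\tfrac{d}{dt}\|f\|_{H^s}^2\le C(1+\|f\|_{H^s}^2)$, hence $f\in C([0,T];H^s(\mathcal{S}^2))$ whenever $f_0\in H^s$; if $f_0\in C^\infty$ this holds for every $s$, so $f\in C([0,T];C^\infty(\mathcal{S}^2))$ by Sobolev embedding. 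For (iii): the $H^s$ estimate already gives $\int_0^T\|f\|_{H^{s+1}}^2\,dt\le C$; one proves \eqref{regul_ineq} by induction on $m$, multiplying the $H^{s+m}$ energy estimate by $t^m$ and using $\tfrac{d}{dt}(t^m\|f\|_{H^{s+m}}^2)=mt^{m-1}\|f\|_{H^{s+m}}^2+t^m\tfrac{d}{dt}\|f\|_{H^{s+m}}^2$ together with the bound on $t^{m-1}\|f\|_{H^{s+m}}^2$ from the previous step (base case $m=0$ being part (ii)), obtaining $t^m\|f(t)\|_{H^{s+m}}^2\le C(1+t^m)$; then $f\in C((0,\infty);H^p(\mathcal{S}^2))$ for all $p$ follows by applying (ii) with initial time $t_0>0$ and data $f(t_0)\in C^\infty(\mathcal{S}^2)$.

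\emph{Main obstacle.} The nonlocal nonlinearity is actually benign: it is an integral against the smooth kernel $K$, so $b[f]$ is as regular as $K$, it depends on $f$ Lipschitz-continuously in weak norms, and it enters the equation bilinearly; together with the non-degenerate Laplace--Beltrami diffusion this makes construction, uniqueness and parabolic smoothing essentially standard. The one genuinely delicate point is the passage from \emph{local} to \emph{global} existence: the only a priori control of the drift is through $\|f\|_{L^2}$ in a superlinear way, so finite-time blow-up cannot be excluded without first proving positivity of the solution, which converts the merely conserved quantity $\int f$ into the conserved \emph{norm} $\|f\|_{L^1}$ and only then makes the $L^2$ estimate close linearly. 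A secondary point requiring care is performing all higher-order and time-weighted estimates at the Galerkin level before passing to the limit, and invoking the Moser and Gagliardo--Nirenberg inequalities on $\mathcal{S}^2$ when $s$ is small.
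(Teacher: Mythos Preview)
Your proposal is correct and follows the same overall architecture as the paper: Galerkin approximation in spherical harmonics, a local $L^2$ energy estimate that is superlinear and hence only gives local existence, uniqueness/continuous dependence via Gronwall on the difference, then positivity $\Rightarrow$ $L^1$-conservation $\Rightarrow$ a time-independent bound on $\|b\|_{L^\infty}$ that closes the $L^2$ estimate linearly and gives global existence, and finally higher-order and time-weighted energy estimates for (ii)--(iii). You even identify the same ``crux'': without positivity the drift is only controlled superlinearly and blow-up cannot be excluded.

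The one genuine methodological difference is in the positivity step. The paper does \emph{not} use the Stampacchia truncation $h=f^-$; instead it first proves the regularity statement (ii) for smooth data, so that for $f_0\in H^4$ the weak solution is in $C([0,t_0];C^2(\mathcal{S}^2))$ and hence classical, applies the weak maximum principle for parabolic equations to $\tilde f=fe^{\omega t}$ with $\omega=\max|\nabla_{\bf d}\!\cdot F|$, and then passes to general $f_0\in L^2$ by density and the continuous-dependence estimate. Your Stampacchia argument is more self-contained (it works directly at the weak level and does not require (ii) as an input to (i)); the paper's route is slightly more circuitous but avoids the chain-rule and truncation technicalities. A minor secondary difference: for (ii) the paper simply tests with $h=(-\Delta_{\bf d})^s g^N$ and uses that $b[f]$ inherits all its regularity from the smooth kernel $K$ (so no Kato--Ponce is needed); for (iii) the paper uses the combined test function $h=(-\Delta_{\bf d})^s g+t(-\Delta_{\bf d})^{s+1}g$ rather than multiplying by $t^m$, but the induction is the same.

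One small caveat that applies equally to the paper: for $s>2$ the bound $\|b\|_{H^s}\le C\|K\|_{C^s}\|f\|_{L^1}+\|{\bf k}\|_{H^s}$ needs more than $K\in C^2$; in the intended application $K$ and ${\bf k}$ are $C^\infty$, and the paper tacitly uses this in Step~3.
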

 \begin{proof} $\;$\\
 	\noindent{STEP 0.} ({\it Preliminaries})
	 Consider spaces of functions with mean zero: 
	 \begin{equation*}
	 \dot{L}^2(\mathcal{S}^2):=L^2(\mathcal{S}^2)\cap\left\{f:\langle f ,1\rangle=0\right\}\;\;
	 \dot{H}^s(\mathcal{S}^2):=H^s(\mathcal{S}^2)\cap\left\{f:\langle f ,1\rangle=0\right\}.
	 \end{equation*}  
	 Note that for $f\in L^1(\mathcal{S}^2)$
	 \begin{equation*}
	 \langle f ,1\rangle=\int_{\mathcal{S}^2}f d S_{\bf d}.
	 \end{equation*}
	 We use $\|\nabla_{\bf d}f\|_{L^2(\mathcal{S}^2)}$ as a norm in $\dot{H}^1(\mathcal{S}^2)$.
	  	 
	 In this proof we assume that $\int_{\mathcal{S}^2} f_0 dS_{\bf d}=1$. Consider the ``mean zero" component of the solution $f$; namely, $g:=f-\frac{1}{4\pi}$. If $f$ is the weak solution of \eqref{fpeqn-nodiff}, then $g$ satisfies   	
	 \begin{eqnarray}
		\frac{d}{dt}\langle g, h\rangle &=& -D\langle \nabla_{\bf d} g, \nabla_{\bf d} h \rangle + \langle \frac{1}{4\pi} + g, \int_{\mathcal{S}^2} K({\bf d},{\bf d}')g ({\bf d}')dS_{{\bf d}'} \cdot \nabla_{{\bf d}} h\rangle \nonumber \\&&+ \langle \frac{1}{4\pi}+g, \left[\int_{\S} K({\bf d},{\bf d}') dS_{{\bf d}'} +{\bf k}({\bf d})\right] \cdot \nabla_{{\bf d}} h \rangle
		\label{eqn_for_g}
		\end{eqnarray}	
 	for all $h\in {H}^1(\mathcal{S}^2)$. Existence, uniqueness, and continuous dependence on initial conditions will be proven for $g$, which is equivalent to the proof of the same properties for $f$. 
 	
 	Below $C$ denotes a positive constant and it may change from line to line. 
 	\medskip 
 	
 	\noindent{STEP 1.} ({\it Local existence})
 	Let $E_N$ be the space spanned by the first $N$ eigenvalues of the Laplace-Beltrami operator $\Delta_{\bf d}$, and let $\Pi_N$ be the orthogonal projector on the space $E_N$. Introduce the Galerkin approximation $g^N$, which is the solution of the following equation:
 	\begin{eqnarray}
 	\frac{d}{dt}\langle g^N, h\rangle &=& -D\langle \nabla_{\bf d} g^N, \nabla_{\bf d} h \rangle + \langle \frac{1}{4\pi} + g^N, \int_{\mathcal{S}^2} K({\bf d},{\bf d}')g^N ({\bf d}')dS_{{\bf d}'} \cdot \nabla_{{\bf d}} h\rangle \nonumber \\&&+ \langle \frac{1}{4\pi}+g, \left[\int_{\S} K({\bf d},{\bf d}') dS_{{\bf d}'} +{\bf k}({\bf d})\right] \cdot \nabla_{{\bf d}} h \rangle,
 	\label{eqn_for_gN}
 	\end{eqnarray}	      
 	for all $h\in E_N$, and $g^N|_{t=0}=\Pi_N g_0$, where $g_0:=f_0-\frac{1}{4\pi}$.      
 	      
 	In a standard manner, the problem \eqref{eqn_for_gN} can be interpreted as a system of $N$ ODEs, and its solution $g^N$ exists for $t\in [0,t_N)$ for some $t_N>0$. Taking $h=g^N$ in \eqref{eqn_for_gN}, using the Cauchy inequality, and the boundedness of $K$ and ${\bf k}$  we obtain 
 	\begin{equation}\label{ineq_1_gN}
 	\frac{d}{dt}\|g^N\|^2_{L^2(\mathcal{S}^2)}\;+\;D\|g^N\|^2_{H^1(\mathcal{S}^2)}\leq C \left(1+\|g^N\|^4_{L^2(\mathcal{S}^2)}\right). 
 	\end{equation}      
 	In the inequality \eqref{ineq_1_gN} the constant $C$ does not depend on $N$. This implies that $g^N$ exists for $0\leq t\leq t_0$ where $t_0$ may be chosen independently from $N$, and 
 	\begin{equation}\label{bound_on_gN}
 	\|g^N(t)\|_{L^2(\mathcal{S}^2)}\leq C, \;\;0\leq t\leq t_0,
 	\end{equation} 
 	The bound \eqref{bound_on_gN} gives that the RHS of \eqref{ineq_1_gN} is estimated by a constant independent from $N$. Then by integrating \eqref{ineq_1_gN} in $t$ we get
 	\begin{equation}\label{boundH1_on_gN}
 	\int_0^{t_0} \|g^N\|^2_{H^1(\mathcal{S}^2)}dt\leq C.
 	\end{equation}      
 	Take $h \in L^2(0,t_0; \dot{H}^1(\mathcal{S}^2))$ in \eqref{eqn_for_gN}, integrate in $t$, and use the Cauchy inequality, $\langle u,v \rangle\leq C \|u\|_{H^1(\mathcal{S}^2)}\|v\|_{H^{-1}(\mathcal{S}^2)}$, and the Minkovsky inequality to obtain
 	 \begin{align*}
 			\int_0^{t_0} \langle \partial_t g^N, h\rangle dt \leq C\left[\int_0^{t_0} \| h\|_{H^{1}(\mathcal{S}^2)}^2dt\right]^{1/2}.\nonumber
 		\end{align*}
 		Therefore,
 		\begin{equation}\label{bound_on_gtN}
 			\int_0^{t_0} \| \partial_t g^N\|^2_{H^{-1}(\mathcal{S}^2)} dt \leq C. 
 		\end{equation}
 	From bounds \eqref{bound_on_gN}, \eqref{boundH1_on_gN}, \eqref{bound_on_gtN} and the following relation which holds for all $g,h$ from $\mathcal{H}$
 	\begin{equation*}
 	\int_0^{t_0} \langle \partial_t g, h\rangle= -\int_0^{t_0} \langle g,\partial_t h\rangle dt +\langle g(t_0), h(t_0)\rangle - \langle g(0), h(0)\rangle,
 	\end{equation*} 
 	we obtain that there exists $g\in\mathcal{H}$ such that (up to a subsequence) 	
 		\begin{eqnarray}
 		&g^N \rightharpoonup g& \text{ in }L^{\infty}(0,t_0; L^2(\mathcal{S}^2))\cap L^{2}(0,t_0; H^1(\mathcal{S}^2)),\label{weak_conv_g}\\
 		&\partial_t g^N \rightharpoonup g& \text{ in } L^{2}(0,t_0; H^{-1}(\mathcal{S}^2)).\label{weak_conv_gt}
 		\end{eqnarray}
 	In particular, weak convergences in \eqref{weak_conv_g} and \eqref{weak_conv_gt} imply strong convergence in $C([0,t_0];L^2(\mathcal{S}^2))$. Thus, $$g|_{t=0}=\lim_{N\to\infty }  g^N|_{t=0}=\lim_{N\to \infty} \Pi_N g_0=g_0, $$ and  
 	\begin{equation}\label{strong_conv_K}
 	\int_{\mathcal{S}^2} K({\bf d},{\bf d}')g^N ({\bf d}')dS_{{\bf d}'} \rightarrow 	\int_{\mathcal{S}^2} K({\bf d},{\bf d}')g ({\bf d}')dS_{{\bf d}'} \text{ in } C([0,t_0];L^2(\mathcal{S}^2)).
 	\end{equation}
 	To complete the proof of local existence we need to show that $g$ solves \eqref{eqn_for_g}. To this end, consider \eqref{eqn_for_gN} with $h=w(t)h_0$, where $h_0\in E_M$, $M<N$ and $w(t)$ is arbitrary smooth function of one argument $t$. Integrate this equation in $t$ over the interval $(0,t_0)$, and pass to the limit $N\to \infty$ ($M$ is fixed) using \eqref{weak_conv_g}, \eqref{weak_conv_gt} and \eqref{strong_conv_K}. Since $w(t)$ is arbitrary we obtain that \eqref{eqn_for_g} is satisfied for all $h_0$ from the space $\cup_{M} E_M$ which is dense in $\dot{H}^1(\mathcal{S}^2)$. Therefore, $g$ solves \eqref{eqn_for_g} for all $h \in \dot{H}^1(\mathcal{S}^2)$. 
 	
 	 Thus, we constructed a function $g$ that is a weak solution of \eqref{eqn_for_g} defined on the time interval $0\leq t\leq t_0$. 
 	
 	\medskip 
 	
 	 \noindent{STEP 2.} ({\it Uniqueness \& continuous dependence on initial conditions})    \\
 	Consider $g^{(1)}$ and $g^{(2)}$, weak solutions of \eqref{eqn_for_g} defined on the time interval $[0,t_0]$ with initial data $g^{(1)}_0$ and $g^{(2)}_0$, respectively. 
 	For both $i=1$ and $i=2$ if one substitutes $h=g^{(i)}$ into the equation \eqref{eqn_for_g} written for $g^{(i)}$, one obtains by using the same arguments as for \eqref{bound_on_gN} that 
 	\begin{equation}\label{gi_bound}
 	\sum_{i=1,2}\|g^{(i)}\|^2_{L^2(\mathcal{S}^2)}
 	< C,\;\;0\leq t\leq t_0,
 	\end{equation}
 	where the constant $C$ depends on initial data $g^{(i)}_0$ and the parameter $D$ only. 
 	
 	By subtracting equation \eqref{eqn_for_g} written for $g^{(2)}$ from equation \eqref{eqn_for_g} written for $g^{(1)}$ we get the following equality 
 	\begin{eqnarray}
	\langle \partial_t u, h\rangle &=& -D\langle \nabla_{{\bf d}} u, \nabla_{{\bf d}} h\rangle + \langle \left[\int_{S^2} K({\bf d},{\bf d}') u dS_{{\bf d}'}\right], \nabla_{{\bf d}} h\rangle \nonumber \\&&+ \langle u, \left[\int_{S^2} K({\bf d},{\bf d}') dS_{{\bf d}'} + {\bf k}\right]\nabla_{{\bf d}} h \rangle\nonumber\\&
	& + \langle u, \left[\int_{S^2} K({\bf d},{\bf d}') g^{(1)} dS_{{\bf d}'}\right] \nabla_{{\bf d}} h\rangle  \nonumber \\&&+\langle g^{(2)}, \left[\int_{S^2} K({\bf d},{\bf d}') u dS_{{\bf d}'} \right]\nabla_{{\bf d}} h \rangle.\label{eqn4}
	\end{eqnarray}
 	By taking $h=u$, using the Cauchy inequality, and \eqref{gi_bound} we obtain 
 	\begin{equation*}
 	\frac{d}{dt}\|u\|^2_{L^2(\mathcal{S}^2)} +D \|u\|^2_{H^1(\mathcal{S}^2)}\leq C\|u\|^2_{L^2(\mathcal{S}^2)}.
 	\end{equation*} 
 	This inequality implies that $\|u(t)\|^2_{L^2(\mathcal{S}^2)}\leq e^{Ct}\|u(0)\|^2_{L^2(\mathcal{S}^2)}$, and, thus,
 	\begin{equation}\label{g_cont_in_ic}
 		\|g^{(1)}(t)-g^{(2)}(t)\|_{L^2(\mathcal{S}^2)}<e^{Ct} 	\|g^{(1)}_0-g^{(2)}_0\|_{L^2(\mathcal{S}^2)}.
 	\end{equation}
 	Again, the constant $C$ depends on initial data $g^{(i)}_0$ and the parameter $D$ only.
 	
 	The inequality \eqref{g_cont_in_ic} implies that a weak solution of \eqref{eqn_for_g} continuously depends on the initial data. In particular, uniqueness holds: if $g^{(1)}_0=g^{(2)}_0$, then from \eqref{g_cont_in_ic} it follows that the corresponding solutions  $g^{(1)}$ and $g^{(2)}$ coincide. 
 	
 	\medskip 
 	
 	\noindent{STEP 3.} ({\it Regularity of weak solutions})\\ 
 	Consider a weak solution $g$ and assume $g_0\in \dot{H}^{s}(\mathcal{S}^2)$ that $s\in \mathbb Z_+$. Such a weak solution exists due to STEP 1, and it can be approximated by Galerkin approximations $g^N$ which follows from uniqueness proved in STEP 2.    
 	
 	By substituting $h=(-\Delta_{\bf d})^{s}g^N$ into the equation \eqref{eqn_for_gN}, using the Cauchy inequality and \eqref{bound_on_gN} we obtain
 	\begin{equation}\label{ineq_g_s}
 	\frac{d}{dt}\|g^N\|^2_{H^s(\mathcal{S}^2)}+D \|g^N\|^2_{H^{s+1}(\mathcal{S}^2)}\leq C(\|g^N\|^2_{H^s(\mathcal{S}^2)}+1),
 	\end{equation} 
 	where the constant $C$ depends on $\|g_0\|_{L^2(\mathcal{S}^2)}$, $\|g_0\|_{H^s(\mathcal{S}^2)}$ and the parameter~$D$. In the same manner as for \eqref{bound_on_gN}, \eqref{boundH1_on_gN} and \eqref{bound_on_gtN} it follows from \eqref{ineq_g_s} that 
 	\begin{eqnarray*}
	 g^N && \text{ is bounded in }L^2 (0,t_0; H^{s+1}(\mathcal{S}^2))\cap L^{\infty} (0,t_0; H^{s}(\mathcal{S}^2)),\\
	 \partial_t g^N && \text{ is bounded in }L^2 (0,t_0; H^{s-1}(\mathcal{S}^2)).   
 	\end{eqnarray*} 
 	Hence, $g\in L^2 (0,t_0; H^{s+1}(\mathcal{S}^2))\cap L^{\infty} (0,t_0; H^{s}(\mathcal{S}^2)) \cap H^1(0,t_0; H^{s-1}(\mathcal{S}^2))$. The standard embedding theorem (e.g., from \cite{Sim87}) implies $g\in C([0,t_0];H^{s}(\mathcal{S}^2))$. 
 	
 	\medskip 
 	
 	\noindent{STEP 4.} ({\it Positivity of weak solutions})\\ 
 	Consider $f=\frac{1}{4\pi}+g$, a weak solution of \eqref{fpeqn-nodiff}. Assume first $f_0\in H^4(\mathcal{S}^2)$ and $f_0({\bf d})\geq 0$. Then $f$ belongs to $C([0,t_0];C^2(\mathcal{S}^2))$, and thus $f$ is a classical solution of~\eqref{fpeqn-nodiff}: 
 	\begin{equation*}
 	\partial_t f= D\Delta_{\bf d} f-F\cdot \nabla_{\bf d} f-(\nabla_{\bf d} \cdot F)f,  
 	\end{equation*}  
 	where $F({\bf d}):=\int_{\mathcal{S}^2}K({\bf d},{\bf d}')f({\bf d}')dS_{{\bf d}'}+{\bf k}({\bf d})\in C([0,t_0];C^1(\mathcal{S}^2))$.
 	Consider $\tilde{f}:=fe^{\omega t}$, where $\omega:=\max\limits_{[0,t_0]\times \mathcal{S}^2}|\nabla_{d}\cdot F|$. Then $\tilde{f}$ solves the following equation 
 	\begin{equation*}
	\partial_t \tilde{f}= D\Delta_{\bf d} \tilde{f}-F\cdot \nabla_{\bf d} \tilde{f}+(\omega-\nabla_{\bf d} \cdot F)\tilde{f}. 	
 	\end{equation*}  
 	Since $\omega-\nabla_{\bf d} \cdot F\geq 0$ the weak maximum principle for parabolic equations applies for $\tilde{f}$, and, thus, $f\geq 0$. 
 	
 	Consider the case of $f_0\in L^2(\mathcal{S}^2)$, which is positive in the distributional sense. Then we can approximate $f_0$ by positive $f^N_0\in H^4(\mathcal{S}^2)$ in the space $L^2(\mathcal{S}^2)$. Denote by $f^N$ solutions of \eqref{fpeqn-nodiff} with initial data $f^N_0$. Then by \eqref{g_cont_in_ic} we can pass to the limit $N\to\infty$ in the inequality
 	\begin{equation*}
 	\langle f^N(t), h\rangle \geq 0
 	\end{equation*}  
 	for all $0\leq t \leq t_0$ and $h\in C(\mathcal{S}^2)$. Thus, the function $f$, which is the solution of \eqref{fpeqn-nodiff} with initial data $f_0$, is positive at least in the distributional sense.
 	
 	\medskip 
 	
 	\noindent{STEP 5.} ({\it Global existence})\\
	Consider $f_0=\frac{1}{4\pi}+g_0\in L^2(\mathcal{S}^2)$, which is positive in the distributional sense. Functions $f$ and $g$ are weak solutions of \eqref{fpeqn-nodiff} and \eqref{eqn_for_g}, respectively. We want to prove in this step that the time interval on which $f$ and $g$ are defined can be extended from $[0,t_0]$ to $[0,T]$ for any given $T>0$.    	
 	  
 	First, observe that 
 	\begin{equation*}
 	\int_{\mathcal{S}^2}f(t) dS_{\bf d}=\int_{\mathcal{S}^2} f_0dS_{\bf d}=1.  
 	\end{equation*}  
 	From the equality above and positivity of $f$ established in STEP 4 we obtain 
 	\begin{equation*}
 	\|f(t)\|_{L^1(\mathcal{S}^2)}=1.
 	\end{equation*}
 	In particular, since $|g|\leq |f|+\frac{1}{4\pi}$ we have 
 	\begin{equation}\label{aux_ineq_for_global}
 	\int_{\mathcal{S}^2}K({\bf d},{\bf d'})g({\bf d}')dS_{{\bf d}'}\leq {C} (\|f(t)\|_{L^1(\mathcal{S}^2)}+1)=2C.
 	\end{equation}
 	
 	Substitute $h=g$ into \eqref{eqn_for_g}, use the Cauchy inequality and \eqref{aux_ineq_for_global} to obtain
 	\begin{equation*}
 	\frac{d}{dt}\|g\|_{L^2(\mathcal{S}^2)}^2 + D\|g\|_{H^1(\mathcal{S}^2)}^2\leq C \left(\|g\|_{L^2(\mathcal{S}^2)}^2+1\right). 
 	\end{equation*} 
 	Then the $L^2$-norm of the weak solution is bounded on all bounded time intervals $[0,T]$: $$\max\limits_{0\leq t\leq T}\|g(t)\|_{L^2(\mathcal{S}^2)}^2<C(e^{CT}+1).$$
 	Thus, global existence follows. 
 	
 	\medskip 
 	
 	\noindent{STEP 6.} ({\it Instantaneous regularity})\\
	Consider positive $f_0\in H^s(\mathcal{S}^2)$ and the corresponding weak solution $f=\frac{1}{4\pi}+g$ of \eqref{fpeqn-nodiff}. According to STEP 3  $f\in L^2([0,T];H^{s+1}(\mathcal{S}^2))$ and, thus, $f\in H^{s+1}(\mathcal{S}^2)$ for almost all $t>0$. Hence, there exists $\tilde{t}>0$ arbitrarily close to $0$ such that $f(\tilde{t})\in H^{s+1}(\mathcal{S}^2)$. Then by uniqueness and STEP 3, $f\in C([\tilde{t},T];H^{s+1}(\mathcal{S}^2))$. We can choose $\tilde{t}$ arbitrarily small and $T$ arbitrarily large (due to global existence proved in STEP 5). By repeating the same arguments for $s+1$, $s+2$, and so on we get 
	\begin{equation*}
	f\in C(0,+\infty;H^{p}(\mathcal{S}^2))
	\end{equation*}     
	for all $p\in \mathbb Z$.

	Next we prove \eqref{regul_ineq} by induction with respect to $m$. Substitute $h=(-\Delta_{\bf d})^{s}g+t(-\Delta_{\bf d})^{s+1}g$ for $t>0$ in \eqref{eqn_for_g} and use the Cauchy inequality to obtain
		\begin{eqnarray*}
&&\frac{d}{dt}\left(\| g \|^2_{\dot{H}^s(\mathcal{S}^2)} +\frac{D}{2}t\| g \|^2_{\dot{H}^{s+1}(\S)}\right)\\ && \hspace{30 pt}+ \frac{D}{2}\left(\| g \|^2_{\dot{H}^{s+1}(\S)} + \frac{D}{2}t\| g\|^2_{\dot{H}^{s+2}(\S)}\right) \leq C\| g_0\|^2_{H^s(\S)}(1+t).
	\end{eqnarray*}  
	Using the Poincare inequality $\|g\|_{\dot{H}^{s+k}(\mathcal{S}^2)}\leq\|g\|_{\dot{H}^{s+k+1}(\mathcal{S}^2)}$ we obtain 
	\begin{equation*}
	\| g \|^2_{\dot{H}^s(\mathcal{S}^2)} +\frac{D}{2}t\| g \|^2_{\dot{H}^{s+1}(\S)}\leq 
	C\| g_0\|^2_{H^s(\S)}(1+t).
	\end{equation*}
	 Thus, the base of induction is shown  
	 \begin{equation}\label{m1}
	 \| g \|^2_{\dot{H}^{s+1}(\S)}<C\| g_0\|^2_{H^s(\S)}\left(1+\frac{1}{t}\right).
	 \end{equation}
	
	Finally, to get the inequality \eqref{regul_ineq} at the order $m+1$ we use the inequality  \eqref{regul_ineq} at order $m$ between times $t/2$ and $t$ and \eqref{m1} between times 0 and $t/2$:
	\begin{align*}
	\| g(t)\|^2_{H^{s+m+1}(\S)} &\leq C\| g\left(\frac{t}{2}\right)\|^2_{H^{s+1}(\S)}\left( 1 + \left(\frac{2}{t}\right)^m\right)\\
	&\leq C\| g_0\|^2_{H^s(\S)}\left(1 + \frac{1}{t^{m+1}}\right).
	\end{align*}
	
	Thus, \eqref{regul_ineq} is proved by induction. 
	
	\medskip 
	
	\noindent{STEP 7.} ({\it Proof of Theorem \ref{thm_galerkin}})
	\begin{list}{}{}
	\item{({\it i})} Existence of a weak solution of \eqref{fpeqn-nodiff} for arbitrary $T>0$ is proved in STEP 5. Uniqueness is proved in STEP 2. To prove continuous dependence on initial data on arbitrary time interval $[0,T]$ one needs to repeat all arguments in STEP 2 replacing $t_0$ by $T$. Positivity is proved in STEP 4. 
	\item{({\it ii})} This part is proved in STEP 3, if one replaces $t_0$ by $T$.
	\item{({\it iii})} This part is proved in STEP 6. 
	\end{list}
 	\rightline{$\square$}
 	\end{proof}

 \section{\bf Conclusions}\label{conc}
 
 In this paper, the derivation of a formula for the effective viscosity formally derived in \cite{RyaHaiBerZieAra11} was made rigorous and an additional term in the asymptotic expansion for the effective viscosity was derived (now up to $O(B^2)$).  This formula revealed the physical mechanisms responsible for the decrease in the effective viscosity confirming the prior formal calculation.  Namely, hydrodynamic interactions, an elongated body, and self-propulsion are required to observe a decrease.  These features are all present in the bacteria {\it Bacillius subtillis} used in the experiments of Aranson et al. \cite{Sokolov2,Sokolov3,Sokolov,SokGolFelAra09,SokAra12}, which motivated this study of the effective viscosity.  In addition, an interesting phenomenon was uncovered: the emergence of self-induced noise where a completely deterministic system governed by interactions resembles a random system for certain regimes of the physical parameters.  The explicit analytical formula for the effective viscosity derived herein showed good qualitative agreement with simulations and experiment.  This paper also establishes the global solvability of solutions to the PDE kinetic equation governing the evolution of the bacterium orientation density.  In order to derive the formula for the effective viscosity, the existence of a steady state was assumed and then computed asymptotically.  Rigorously proving the convergence to a steady state distribution may be the subject of future work.

 \section*{Acknowledgment}
 The authors thank to V.A. Rybalko and I.~S. Aranson for helpful discussions.   The work of LB, MP, and SR were supported by DOE Grant DE-FG-0208ER25862.

 \vspace{0.3 in}
 
 
 \vspace{-0.2 in}

 \bibliographystyle{spmpsci}
 \bibliography{EV_ref}

  \begin{appendix} 
 
 \section{\bf Appendix: Explicit form of integral terms $\text{I}_i$ from \eqref{OB2-int}}\label{KinE-OB2}
 We will need the following technical Lemma: 
 \begin{lemma}
 Assume that $\mathcal{A}$ is a $3\times 3$-matrix that is independent of the orientation vector ${\bf d}$. Then 
 \begin{equation}
 \label{nabla_d_formula}
 \nabla_{\bf d}\cdot [{\bf d}\times ({\bf d}\times \mathcal{A}{\bf d})]=3({\bf d},\mathcal{A}{\bf d})-\text{\rm Tr}\mathcal{A}.
 \end{equation}
 In particular, if  
 	 \begin{equation}\label{particular_A}
 	 \mathcal{A} = \left[\begin{array}{ccc} A & C & 0\\  C & -A & 0 \\ 0 & 0 & 0 \end{array}\right],
 	 \end{equation}
 	 then 
 	  \begin{equation}\label{nabla_d_ACformula}
 	  \nabla_{\bf d}\cdot [{\bf d}\times ({\bf d}\times \mathcal{A}{\bf d})]= A\sin^2\beta\cos(2\alpha) + C\sin^2\beta\sin(2\alpha).
 	  \end{equation}
  
 \end{lemma}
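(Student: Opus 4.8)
The plan is to reduce the spherical (Laplace--Beltrami--type) divergence $\nabla_{\mathbf d}\cdot$ to the ordinary divergence in $\mathbb R^3$ and then finish with a one-line computation. First I would apply the vector triple product identity: for \emph{any} $\mathbf d\in\mathbb R^3$,
\begin{equation*}
V(\mathbf d):=\mathbf d\times(\mathbf d\times\mathcal A\mathbf d)=(\mathbf d\cdot\mathcal A\mathbf d)\,\mathbf d-|\mathbf d|^2\,\mathcal A\mathbf d ,
\end{equation*}
a polynomial vector field on $\mathbb R^3$ that is homogeneous of degree $3$. The key structural observation is that $V(\mathbf d)\cdot\mathbf d\equiv 0$ for every $\mathbf d\in\mathbb R^3$ (it is manifestly orthogonal to $\mathbf d$, being of the form $\mathbf d\times(\,\cdot\,)$). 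Hence $|\mathbf d|^2\,(V\cdot\mathbf d)\equiv 0$, so the correction term $\partial_{|\mathbf d|}\{|\mathbf d|^2(V\cdot\mathbf d)\}$ in the representation \eqref{div2} of $\nabla_{\mathbf d}\cdot$ vanishes identically, and we are left with $\nabla_{\mathbf d}\cdot V=\tilde\nabla_{\mathbf d}\cdot V$ evaluated on $|\mathbf d|=1$.

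Second, I would compute $\tilde\nabla_{\mathbf d}\cdot V$ directly. Set $f(\mathbf d):=\mathbf d\cdot\mathcal A\mathbf d$; it is homogeneous of degree $2$, so $\mathbf d\cdot\tilde\nabla_{\mathbf d}f=2f$ by Euler's identity. Using also $\tilde\nabla_{\mathbf d}\cdot\mathbf d=3$, $\tilde\nabla_{\mathbf d}\cdot(\mathcal A\mathbf d)=\operatorname{Tr}\mathcal A$, and $\tilde\nabla_{\mathbf d}(|\mathbf d|^2)=2\mathbf d$, one obtains $\tilde\nabla_{\mathbf d}\cdot(f\,\mathbf d)=3f+2f=5f$ and $\tilde\nabla_{\mathbf d}\cdot(|\mathbf d|^2\mathcal A\mathbf d)=2f+|\mathbf d|^2\operatorname{Tr}\mathcal A$. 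Subtracting and restricting to $|\mathbf d|=1$ gives $\nabla_{\mathbf d}\cdot V=3(\mathbf d,\mathcal A\mathbf d)-\operatorname{Tr}\mathcal A$, which is \eqref{nabla_d_formula}. The special case \eqref{particular_A} then follows by plugging in: $\operatorname{Tr}\mathcal A=A+(-A)+0=0$ while, with the parametrization \eqref{d_spherical}, $(\mathbf d,\mathcal A\mathbf d)=A(d_1^2-d_2^2)+2C\,d_1d_2=A\sin^2\beta\cos(2\alpha)+C\sin^2\beta\sin(2\alpha)$; substituting yields \eqref{nabla_d_ACformula}.

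I do not expect a genuine obstacle; the lemma is essentially bookkeeping. The one point that needs care is the transition between the intrinsic operator $\nabla_{\mathbf d}\cdot$ on $\mathcal S^2$ and the ambient divergence $\tilde\nabla_{\mathbf d}\cdot$: it is essential that $V\cdot\mathbf d$ vanish \emph{identically on $\mathbb R^3$} (not merely on the sphere), since that is exactly what kills the radial correction in \eqref{div2} and collapses the problem to the trivial Euclidean computation above. If one prefers to avoid \eqref{div2}, an equivalent and fully elementary route is to observe that only the tangential part of $V$ enters, compute $V\cdot\hat\alpha=-(\mathcal A\mathbf d)\cdot\hat\alpha$ and $V\cdot\hat\beta=-(\mathcal A\mathbf d)\cdot\hat\beta$ on $\mathcal S^2$, and insert these into the spherical form $\tfrac{1}{\sin\beta}[\partial_\alpha(V\cdot\hat\alpha)+\partial_\beta(\sin\beta\,V\cdot\hat\beta)]$ of the divergence --- a slightly longer but purely trigonometric calculation.
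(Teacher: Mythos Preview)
Your proof of \eqref{nabla_d_formula} is correct and follows the same strategy as the paper: both invoke the relation \eqref{div2} between the spherical divergence $\nabla_{\bf d}\cdot$ and the Euclidean divergence $\tilde\nabla_{\bf d}\cdot$, and then compute the latter by elementary product-rule calculus. The only difference is the choice of extension of the field off $\mathcal S^2$. The paper extends as ${\bf d}({\bf d},\mathcal A{\bf d})-\mathcal A{\bf d}$, which has a nonzero radial component, so a nonvanishing correction term $-\partial_{|{\bf d}|}\{|{\bf d}|^5(\hat{\bf d},\mathcal A\hat{\bf d})-|{\bf d}|^3(\hat{\bf d},\mathcal A\hat{\bf d})\}|_{|{\bf d}|=1}=-2({\bf d},\mathcal A{\bf d})$ must be computed and added to the Euclidean divergence $5({\bf d},\mathcal A{\bf d})-\operatorname{Tr}\mathcal A$. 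You instead keep the natural homogeneous extension $V({\bf d})=({\bf d}\cdot\mathcal A{\bf d}){\bf d}-|{\bf d}|^2\mathcal A{\bf d}={\bf d}\times({\bf d}\times\mathcal A{\bf d})$, observe $V\cdot{\bf d}\equiv 0$ on all of $\mathbb R^3$, and thereby kill the radial correction entirely. This is a genuine, if minor, streamlining of the paper's argument.

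One slip in your last line, which the paper also glosses over: your own computation gives $({\bf d},\mathcal A{\bf d})=A\sin^2\beta\cos 2\alpha+C\sin^2\beta\sin 2\alpha$, and since $\operatorname{Tr}\mathcal A=0$, substituting into \eqref{nabla_d_formula} yields $3({\bf d},\mathcal A{\bf d})$, i.e.\ \emph{three times} the right-hand side of \eqref{nabla_d_ACformula} as written. A direct spherical-coordinate check (via the alternative route you sketch at the end) confirms the factor $3$; the stated \eqref{nabla_d_ACformula} is missing it, and you have reproduced rather than caught the discrepancy.
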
 
 \begin{remark}
 	Recall that $\nabla_{\bf d}$ denotes the spherical gradient in orientation ${\bf d}$, and $\tilde{\nabla}_{\bf d}$ denotes the classical gradient in vector ${\bf d}$ (e.g., see \eqref{div2}). 
\end{remark}
\begin{proof}
Using the well-known vector identity $a\times (b \times c)=b(a,c)-c(a,b)$ and the relation \eqref{div2} we obtain 
\begin{eqnarray}
 	&&\nabla_{\bf d} \cdot \left[{\bf d} \times {\bf d} \times\mathcal{A} {\bf d} \right] =  \nabla_{\bf d} \cdot \left[{\bf d} ({\bf d}, \mathcal{A}{\bf d}) - \mathcal{A}{\bf d} \right]\label{app_form_0}\\ &&\hspace{50 pt}= \tilde{\nabla}_{\bf d} \cdot \left[{\bf d}({\bf d}, \mathcal{A}{\bf d}) - \mathcal{A}{\bf d}\right] -\frac{\partial}{\partial |{\bf d} |} \left\{ |{\bf d} |^5(\hat{{\bf d}}, \mathcal{A}\hat{{\bf d}}) - |{\bf d} |^3(\hat{{\bf d}}, \mathcal{A}\hat{{\bf d}})\right\}\biggr|_{|{\bf d}| =1}.\nonumber
 	\end{eqnarray}
Here $\hat{\bf d}={\bf d}/|{\bf d}|$.  The orientation ${\bf d}$ is a unit vector, but in order to relate the classical and the spherical divergence we need to calculate the derivative in $|{\bf d}|$ at $|{\bf d}|=1$; thus, consider ${\bf d}$ different from unit magnitude.  Also, note that $\hat{\bf d}$ does not depend on $|{\bf d}|$.

One can easily verify that 
 \begin{align}
  \tilde{\nabla}_{\bf d} \cdot \left[{\bf d}({\bf d}, \mathcal{A}{\bf d})-\mathcal{A}{\bf d}\right] &= 3({\bf d}, \mathcal{A}{\bf d}) + {\bf d} \cdot \tilde{\nabla} ({\bf d}, \mathcal{A}{\bf d}) - \text{Tr}(\mathcal{A})\nonumber\\
 &=3({\bf d}, \mathcal{A}{\bf d}) + {\bf d} \cdot  2\mathcal{A}{\bf d} - \text{Tr}(\mathcal{A})\nonumber\\
 &=5({\bf d}, \mathcal{A}{\bf d})- \text{Tr}(\mathcal{A}).\label{app_form_1}
 \end{align}
 and 
 \begin{align}
   -\frac{\partial}{\partial |{\bf d} |} \left\{ |{\bf d} |^5(\hat{{\bf d}}, \mathcal{A}\hat{{\bf d}}) - |{\bf d} |^3(\hat{{\bf d}}, \mathcal{A}\hat{{\bf d}})\right\}\biggr|_{|{\bf d}| =1} = -2({\bf d}, \mathcal{A}{\bf d}).\label{app_form_2}
 \end{align}
 Substituting \eqref{app_form_1} and \eqref{app_form_2} into \eqref{app_form_0} we obtain \eqref{nabla_d_formula}. The formula \eqref{nabla_d_ACformula} follows directly from substituting \eqref{particular_A} into \eqref{nabla_d_formula}.

	\rightline{$\square$} \end{proof}

Next we evaluate integral terms $\text{I}_i$ introduced in Subsection \ref{contr_at_B2}.  

\medskip 

 \noindent{\it The integral term $\text{\rm I}_1$.} This integral is defined by 
 \begin{equation*}
 \text{I}_1:=\frac{1}{4\pi N|V_L|} \nabla_{{\bf d}} \cdot \int_{\mathcal{S}^2} \langle F[{\bf E}]F[P_{\x}]^2 \rangle_{{\bf k}} P_{{\bf d}}^{(1)}({\bf d}')dS_{{\bf d}'}
 \end{equation*}
  and due to \eqref{fourier_du} and \eqref{FourierE} can be written as:
\begin{align*}
 	\text{I}_1 =\frac{-1}{8\eta\pi N|V_L|} \nabla_{{\bf d}} \cdot \left[{\bf d} \times {\bf d} \times \left\{\int  \mathcal{M}F[P_{\bf x}]^2 d{\bf k} \right\} {\bf d} \right].
 	\end{align*}
 	Here $\hat{\bf k}:={\bf k}/|{\bf k}|$, and the $3\times 3$ matrix $\mathcal{M}$ is defined by 
 	\begin{equation*}
 	\mathcal{M}:=  \mathcal{F} \hat{\bf k}\hat{\bf k}^* - 2\hat{\bf k}\hat{\bf k}^* \mathcal{F}\hat{\bf k}\hat{\bf k}^* + \hat{\bf k}\hat{\bf k}^* \mathcal{F}, 
 	\end{equation*} where  
  \begin{equation}\label{matrix_F}
 \mathcal{F} := \left[\int_{\mathcal{S}^2} \tilde{\Sigma} P_{{\bf d}}^{(1)}({\bf d}') dS_{{\bf d}'}\right] = -\frac{3U_0}{8\pi}\left[\begin{array}{ccc} \frac{8\pi}{15} & 0 & 0\\
 0 & -\frac{8\pi}{15} & 0\\
 0 & 0 & 0\end{array}\right]= -\frac{U_0}{5}\left[\begin{array}{ccc} 1 & 0 & 0\\
 0 & -1 & 0\\
 0 & 0 & 0\end{array}\right].
 \end{equation}

 Substituting $\mathcal{F}$ into the expression for $\mathcal{M}$ one finds that $\mathcal{M}$ equals to 
  {\small \begin{equation*}
  	\left[\begin{array}{ccc} 2k_1^2 - 2k_1^4 + 2k_1^2k_2^2 & -2k_1^3k_2 + 2k_1k_2^3 & k_1k_3 -2k_1^3k_3 + 2k_1k_2^2k_3\\
  	-2k_1^3k_2 + 2k_1k_2^3 & -2k_2^2 - 2k_1^2k_2^2 + 2k_2^4 & -k_2k_3 - 2k_1^2k_2k_3 + 2k_2^3k_3\\
  	k_1k_3 -2k_1^3k_3 + 2k_1k_2^2k_3 & -k_2k_3 - 2k_1^2k_2k_3 + 2k_2^3k_3 & -2k_1^2k_3^2 + 2k_2^2k_3^2\end{array}\right],
  	\end{equation*}}
  where $k_1,k_2,k_3$ are components of $\hat{\bf k}$.
  
 Next we use the condition (C3) from Subsection \ref{ass_a3} written in terms of the representation formula \eqref{ass_a3_formula}  to obtain  
 \begin{equation}\nonumber
  \int \mathcal{M}(F[P_{\x}])^2 d{\bf k}=\int \mathcal{M}|_{k_3=0}\hat{P}_{12}(|{\bf k}|k_1,|{\bf k}|k_2)|{\bf k}|^2 d|{\bf k}|d\theta,
  \end{equation}
 where 
 {\footnotesize
 	\begin{align*}
 	\mathcal{M}|_{k_3=0} &= \left[\begin{array}{ccc} 2k_1^2[1 - k_1^2 + k_2^2] & -2k_1k_2[k_1^2 - k_2^2] & 0 \\
 	-2k_1k_2[k_1^2 - k_2^2] & -2k_2^2[1 + k_1^2 - k_2^2] & 0\\ 0 & 0 & 0 \end{array}\right]\\
 	&  = \left[\begin{array}{ccc} 4k_1^2k_2^2 & -2k_1k_2[k_1^2 - k_2^2] & 0 \\
 	-2k_1k_2[k_1^2 - k_2^2] & -4k_1^2k_2^2 & 0\\ 0 & 0 & 0 \end{array}\right]  = \left[\begin{array}{ccc} \sin^2(2\theta) & -\frac{1}{2}\sin(4\theta) & 0 \\
 	-\frac{1}{2}\sin(4\theta) & -\sin^2(2\theta) & 0\\ 0 & 0 & 0 \end{array}\right]. 
 	\end{align*}}
 Here  variables $k_1$ and $k_2$ are expressed in polar coordinates $k_1 = \cos(\theta)$ and $k_2 = \sin(\theta)$.

Note that the matrix in the equality above is of the form \eqref{particular_A} and, thus, we may apply \eqref{nabla_d_ACformula}: 
\begin{equation*}
\text{I}_1=\frac{ U_0}{40\pi \eta_0 N|V_L|}\left(A\sin^2\beta\cos(2\alpha) + C\sin^2\beta\sin(2\alpha)\right).
\end{equation*}
 This is the desired formula for $\text{I}_1$. 
 
 \medskip 
 
 \noindent{\it The integral term $\text{\rm I}_2$.} We need to obtain that $\text{I}_2=0$. This holds provided that
 \begin{equation}\label{i2_vanish}
 \int\limits_{\mathcal{S}^2}\langle F[\boldsymbol{\omega}](F[P_{\bf x}])^2\rangle_{\bf k} dS_{{\bf d}'}=0.
 \end{equation}

The integral in the RHS of \eqref{i2_vanish} by using inverse Fourier transform can be written as 
\begin{equation*}
-\frac{1}{2}{\bf d} \times \int\int P_{\x}(\x')P_{\x}(\x)\nabla_{\x}\times\left\{\int_{\mathcal{S}^2}{\bf u}(\x-\x',{\bf d}')dS_{{\bf d}'}\right\}d{\bf x}d{\bf x}'.
\end{equation*} 
The integral in curly braces is zero due to 
\begin{equation}
\label{int_u_is_zero}
\int_{\mathcal{S}^2} {\bf u}({\bf x},{\bf d}') d {\bf d}'=\left[\int_{\mathcal{S}^2}U_0({\bf d}'({\bf d}')^*-I/3)dS_{{\bf d}'}\right]:\nabla_{\bf x}\mathcal{G}=0.
\end{equation}
Thus, \eqref{i2_vanish} holds and  $\text{I}_2=0$.

 \medskip 
 
 \noindent{\it The integral term $\text{\rm I}_3$.} To prove that $\text{I}_3=0$ we can use the same arguments as for $\text{I}_2$. Indeed, $\text{I}_3$ vanishes provided that 
   \begin{equation}\nonumber
   \int\limits_{\mathcal{S}^2}\langle F[{\bf E}](F[P_{\bf x}])^2\rangle_{\bf k} dS_{{\bf d}'}=0.
   \end{equation}
One can easily verify this equality by using the inverse Fourier transform and the identity \eqref{int_u_is_zero}.  

 \medskip 
 
 \noindent{\it The integral term $\text{\rm I}_4$.} This integral can be written as 
 \begin{align*}
 \frac{\nabla_{{\bf d}}[P^{(1)}_{{\bf d}}({\bf d})]}{N|V_L|} \int_{\mathcal{S}^2} P^{(1)}_{{\bf d}}({\bf d}') \langle F[{\boldsymbol \omega}](F[P_{\bf x}])^2\rangle_{{\bf k}} dS_{{\bf d}'}.
 \end{align*}
 According to \eqref{Fourierc}, the formula for $F[\boldsymbol{\omega}]$ is the following
 \begin{align*}
 F[{\boldsymbol \omega}] = -\frac{\bf d}{2} \times [-i{\bf k} \times \tilde{\bf u}] = -\frac{{\bf d}}{2\eta} \times \left[\hat{{\bf k}} \times \left(I - \hat{{\bf k}}\hat{{\bf k}}^*\right)\tilde{\Sigma}\hat{{\bf k}}\right].
 \end{align*}
 Recall that $\hat{{\bf k}} = {\bf k} / |{\bf k} |=(k_1,k_2,k_3)$. Using \eqref{matrix_F} we obtain
 \begin{eqnarray*}
 	M:=\int_{\mathcal{S}^2} F[\boldsymbol{\omega}]P_{{\bf d}}^{(1)}({\bf d}') dS_{{\bf d}'}&=&-\frac{1}{2\eta}{\bf d}\times\left[\hat{{\bf k}} \times \left(I - \hat{{\bf k}}\hat{{\bf k}}^*\right)\mathcal{F}\hat{{\bf k}}\right]\\
 	&=&\frac{U_0}{10\eta}{\bf d} \times \left[\begin{array}{c} k_2k_3 \\ k_1k_3\\ -2k_1k_2\end{array}\right].
 \end{eqnarray*}

   In the same manner as we analyzed $\text{I}_1$, we use the condition (C3) from Subsection~\ref{ass_a3} written in terms of the representation formula \eqref{ass_a3_formula}, the form of orientation ${\bf d}$ in spherical angles \eqref{d_spherical}, and polar angle $\theta$ for $k_1=\cos\theta$ and $k_2=\sin\theta$:
   \begin{equation}\label{i4_f_formula}
   \text{I}_4=\frac{\nabla_{{\bf d}}[P^{(1)}_{{\bf d}}({\bf d})]}{N|V_L|}\cdot\int M|_{k_3=0}\hat{P}^2_{12}(|{\bf k}|k_1,|{\bf k}|k_2)|{\bf k}|^2 d|{\bf k}|d\theta,
   \end{equation}
   where 
   \begin{equation*}
   M|_{k_3=0}=\frac{U_0}{10\eta}\sin (2\theta)\left[\begin{array}{c} -\sin\alpha \sin\beta \\ \cos\alpha\sin\beta \\0\end{array}\right].
 \end{equation*}
Next we find $\nabla_{{\bf d}}[P^{(1)}_{{\bf d}}({\bf d})]$. Using \eqref{pd_at_order_1} and the definition of the spherical gradient $\nabla_{\bf d}$:
 \begin{equation*}
 \nabla_{{\bf d}}P = \left[\begin{array}{c} -\frac{\sin(\alpha)}{\sin(\beta)}\partial_{\alpha}P + \cos(\alpha)\cos(\beta)\partial_{\beta}P\\
 \frac{\cos(\alpha)}{\sin(\beta)}\partial_{\alpha}P + \sin(\alpha)\cos(\beta)\partial_{\beta}P\\
 -\sin(\beta)\partial_{\beta}P\end{array}\right],
 \end{equation*}
 we obtain that
 \begin{align*}
 \nabla_{{\bf d}}P_{{\bf d}}^{(1)}({\bf d}) &= \frac{-3}{4\pi}\left[\begin{array}{c} \sin\alpha\sin(2\alpha)\sin\beta + \cos\alpha\cos(2\alpha)\sin\beta\cos^2\beta\\
 -\cos\alpha\sin(2\alpha)\sin\beta + \sin\alpha\cos(2\alpha)\sin\beta\cos^2\beta\\
 -\sin^2\beta\cos\beta\cos(2\alpha)\end{array}\right].
 \end{align*}
 Substituting this equality into \eqref{i4_f_formula} one obtains the desired formula for $\text{I}_4$: 
 \begin{equation*}
 \text{I}_4=\frac{3U_0}{10\pi\eta_0 N|V_L|}D\sin(2\alpha)\sin^2\beta.
 \end{equation*}
 This concludes the evaluation of integral terms $\text{I}_i$ for $i = 1,...,4$. 
 
 \section{\bf Appendix: Justification of the representation formula \eqref{ass_a3_formula}}\label{pxapp}
 
 Consider the spatial distribution $P_{\x}(x,y,z)=c_L\chi(z)P_{12}(x,y)$, where 
 \begin{equation}
 \chi(z)=\left\{\begin{array}{lr}1,& z\in (-L,L),\\
 0,& z\notin (-L,L), \end{array}\right.
 \end{equation}
and we choose $c_L=4/\sqrt{\pi L}$. The distribution $P_{\x}$ satisfies the condition (C3), i.e., its support does not depend on $z$.   

Our main goal of this subsection is to obtain a representation for the Fourier transform of $P_{\x}$: 
\begin{equation}\label{px_fourier}
F[P_{\bf x}]=\int_{-L}^{L}\chi( z) e^{i k_3 z}dk_3 \hat{P}_{12} (k_1,k_2)=-\frac{2}{\sqrt{\pi L}}\frac {\sin (k_3 L)}{k_3}\hat{P}_{12} (k_1,k_2). 
\end{equation}

For an arbitrary continuous function $\phi$ the following convergence holds:
\begin{equation}\label{px_converge}
\frac{1}{\pi L}\int\limits_{-L}^{L} \left(\frac{\sin (k_3 L)}{k_3}\right)^2\phi (k_3)dk_3\to \phi(0). 
\end{equation}

From \eqref{px_fourier} and \eqref{px_converge} it follows that for large $L$ we have 
\begin{equation}
(F[P_{\bf x}])^2\approx \delta (k_3) \hat{P}^2_{12}(k_1,k_2), 
\end{equation}
which justifies \eqref{ass_a3_formula}. Note that due to our choice of $c_L$ it follows from $\int_{V_L} P_{\x}d\x=N$ and $N\sim L^3$  that $P_{12}\sim \sqrt{L}$.  

It is also interesting to calculate the coefficient $A$ defined by \eqref{coefs_ACD} for the spatial distribution $P_{\x}(\x)=\frac{1}{\rho}\chi(x)\chi(y)\chi(z)$ 
which is uniform in $V_L$. Then
\begin{equation}\label{hatP_asymp}
\hat{P}_{12}^2=\sqrt{L} \left(\frac{\sin (k_1 L)}{k_1}\right)^2\left(\frac{\sin (k_2 L)}{k_2}\right)^2\sim  \delta(k_1,k_2) L^{5/2}.
\end{equation}

 In this case the coefficient $A$ is of the order $L^{5/2}$. It is responsible for the decrease in viscosity. Namely, for fixed number density $\rho=N/L^3$, the Bretherton constant $B$, the dipole moment $U_0$ and the strength of the background flow $\gamma$ it follows from \eqref{eqn:evformula} that $\eta^{\text{int}}\sim A/L^6$. Then due to \eqref{hatP_asymp}
\begin{equation}
\eta^{\text{int}}\sim \frac{1}{L^{7/2}}\to 0 \text{ as } L\to\infty.   
\end{equation} 
Therefore, $\tilde{A}=A L^{-5/2}$ can serve as a measure of the deviation of the spatial density $P_{\bf x}(\x)$ from uniform.

\end{appendix}
\end{document}